\NeedsTeXFormat{LaTeX2e}% LaTeX 2.09 can't be used (nor non-LaTeX)
[1994/12/01]% LaTeX date must December 1994 or later
\documentclass[draft]{article}
\pagestyle{headings}

\title{\textbf{A note on the lower bounds of the first nonzero Steklov eigenvalue on compact manifolds}}
\author{Yiwei Liu and Yi-Hu Yang{\footnote{Partially supported by NSF of China (No. 12071283)}}}

\date{\today}

\usepackage{amsmath,amsthm,amssymb}
\usepackage[all,pdf]{xy}
\usepackage[numbers,sort&compress]{natbib}
%    Some definitions useful in producing this sort of documentation:
\chardef\bslash=`\\ % p. 424, TeXbook
%    Normalized (nonbold, nonitalic) tt font, to avoid font
%    substitution warning messages if tt is used inside section
%    headings and other places where odd font combinations might
%    result.

%    command name

%    LaTeX package name

%    File name

%    environment name

\hfuzz1pc % Don't bother to report overfull boxes if overage is < 1pc

%       Theorem environments

%% \theoremstyle{plain} %% This is the default

\newtheorem{theorem}{\bf Theorem}[section]
\newtheorem{lemma}[theorem]{\bf Lemma}
\newtheorem{proposition}[theorem]{\bf Proposition}

\newtheorem{corollary}[theorem]{\bf Corollary}
\newtheorem{example}[theorem]{\bf Example}
\newtheorem{defn}[theorem]{\bf Definition}

\newenvironment{proof1}{\noindent{\em Proof of Theorem  \ref{thm: 1.1}:}}{\quad \hfill$\Box$\vspace{2ex}}
\newenvironment{proof2}{\noindent{\em Proof of Theorem  \ref{thm: 1.3}:}}{\quad \hfill$\Box$\vspace{2ex}}
\newenvironment{proof3}{\noindent{\em Proof of Theorem  \ref{thm: 1.5}:}}{\quad \hfill$\Box$\vspace{2ex}}
\newenvironment{proof4}{\noindent{\em Proof of Corollary  \ref{Cor: 1.2}:}}{\quad \hfill$\Box$\vspace{2ex}}

\newenvironment{remark}{\noindent{\bf Remark.}}{\vspace{2ex}}

%\numberwithin{equation}{section}

%       Math definitions

%    \interval is used to provide better spacing after a [ that
%    is used as a closing delimiter.

%    Notation for an expression evaluated at a particular condition. The
%    optional argument can be used to override automatic sizing of the
%    right vert bar, e.g. \eval[\biggr]{...}_{...}
\newcommand{\eval}[2][\right]{\relax
  \ifx#1\right\relax \left.\fi#2#1\rvert}

%    Enclose the argument in vert-bar delimiters:

%    Enclose the argument in double-vert-bar delimiters:

\begin{document}
\maketitle

\renewcommand{\sectionmark}[1]{}

\begin{abstract}
Let $(\Omega^{n+1}, g)$ be an $(n+1)$-dimensional smooth compact connected Riemannian manifold 
with smooth boundary $\Sigma$, satisfying that ${\text{Ric}_{\Omega}}\ge 0$ and $\Sigma$ is strictly convex,
more precisely, its second fundamental form $h\ge cg_{\Sigma}$ for some positive constant $c$. 
Escobar {\cite{escobar1997geometry}} considered the first 
nonzero Steklov eigenvalue $\sigma_1$ of $(\Omega^{n+1}, g)$ 
and proved that $\sigma_1\geq c$ 
when $n=1$ and $\sigma_1>{\frac{c}{2}}$ 
when $n \geq 2$. He then conjectured 
{\cite{escobar1999isoperimetric}} that the first nonzero Steklov
eigenvalue $\sigma_1\ge c$. Very recently, Xia and Xiong {\cite{xia2023escobar}} 
confirmed Escobar's conjecture in the case that $\Omega$ has nonnegative sectional curvature,
by constructing a weight function and using appropriate integral identities.
In this paper, we construct a new weight function under 
certain sectional curvature assumptions 
and provide some new lower bounds for the first nonzero Steklov eigenvalue, which can be considered
as generalizations of the results of Escobar and Xia-Xiong. As an application of 
the weight function, we also consider lower bound estimate 
of the first nonzero Steklov eigenvalue under conformal transformations.
\end{abstract}

\section{Introduction}

Let $(\Omega^{n+1},g)$ be an $(n + 1)$-dimensional $(n\geq1)$ smooth compact connected Riemannian manifold with smooth boundary $\partial\Omega=\Sigma$.  
The Steklov eigenvalue problem is the following (see \cite{kuznetsov2014legacy} and \cite{stekloff1902problemes})
\begin{equation}
\begin{cases}
\begin{aligned}
    	&\Delta u=0, & \mbox{in} \,\Omega,\\
    	&\frac{\partial u}{\partial \nu}=\sigma u, & \mbox{on} \, \Sigma,
\end{aligned}
\end{cases}
\label{e:1}
\end{equation}
where $\Delta$ is the Laplace-Beltrami operator of $\Omega$ and $\nu$ is the outward unit normal vector on $\Sigma$. It is well-known that the spectrum of the eigenvalue problem (\ref{e:1}) is nonnegative, discrete and unbounded:
$$0=\sigma_{0}<\sigma_{1}\leq\sigma_{2}\leq\cdots\rightarrow+\infty.$$
For the first nonzero Steklov eigenvalue $\sigma_1$, we also know that
\begin{equation}
    \sigma_1=\inf_{u\in C^{\infty}(\Sigma), \int_{\Sigma}uda=0} \frac{\int_{\Omega} |\nabla (\mathcal{H}u)|^2 dA}{\int_{\Sigma} u^2 da},
\end{equation}
where $\mathcal{H}u$ is the harmonic extension of $u$ in $\Omega$.
For more information about the Steklov eigenvalue problem, interested
readers also can refer to \cite{girouard2017spectral}.

In this paper, we provide some results related to the first nonzero Steklov eigenvalue $\sigma_{1}$ by constructing a new weight function and using appropriate integral identities.

\subsection{Lower bound estimate of the first nonzero Steklov eigenvalue related to Escobar's conjecture}

In \cite{payne1970some}, by the maximum principle, Payne proved that for a bounded domain $\Omega \subset \mathbb{R}^2$, if the geodesic curvature $k_g$ of the boundary curve satisfies $k_g \geq c>0$, then the first nonzero Steklov eigenvalue $\sigma_1$ of $\Omega$ satisfies $\sigma_1 \geq c$, with equality holding if and only if $\Omega$ is a round disk of radius $\frac{1}{c}$. Later, Escobar \cite{escobar1997geometry} generalized Payne's result to $2$-dimensional compact manifolds with nonnegative Gaussian curvature by a similar method.
In higher dimensions, by using Reilly's formula (see \cite{reilly1977applications}), he \cite{escobar1997geometry} also provided a non-sharp estimate $\sigma_1 > \frac{c}{2}$ for compact manifolds $(\Omega^{n+1},g)$ which satisfy $\mbox{Ric}_{\Omega} \geq 0$ and $h \geq cg_{\Sigma} >0$, where $h \geq cg_{\Sigma} >0$ means the principal curvatures of $\Sigma$ $\geq c>0$.
%$\mbox{Ric}_{\Omega} \geq 0$ means the Ricci curvature of $\Omega$ is nonnegative, $h$ is the second fundamental form of $\Sigma$ with respect to $\nu$ and $h \geq cg_{\Sigma} >0$ means the principal curvatures of $\Sigma$ $\geq c>0$.

Based on the above results, Escobar made the following conjecture (see \cite{escobar1999isoperimetric}).

\vskip .2cm
\noindent
{\bf Escobar's conjecture:} \emph{Let $(\Omega^{n+1},g)$ be an $(n+1)$-dimensional smooth compact connected Riemannian manifold with smooth boundary $\Sigma$. Assume that $\mbox{Ric}_{\Omega} \geq 0$ and $h \geq cg_{\Sigma} >0$. Then the first nonzero Steklov eigenvalue $\sigma_1$ satisfies
$$
\sigma_1 \geq c.
$$
Moreover, the equality holds if and only if $\Omega$ is isometric to a Euclidean ball
of radius $\frac{1}{c}$.}\\[3pt]

When $\Omega$ is a ball equipped with rotationally invariant metric, this conjecture had been proven by Montaño \cite{montano2013stekloff} (see also \cite{xiong2022spectra}). We also notice that Montaño \cite{montano2016escobar} confirmed this conjecture for Euclidean ellipsoids.
Later, Xia and Xiong \cite{xia2023escobar} showed that Escobar's conjecture is true for manifolds with nonnegative sectional curvature. It should be pointed out that their method is mainly based on the weighted Reilly-type formula (see \cite{qiu2015generalization}), the Pohozaev-type identity (see \cite{xiong2018comparison}) and a special weight function
$$V=\rho-\frac{c}{2}\rho^2,$$
where $\rho=d(\cdot,\Sigma)$ is the distance function to the boundary $\Sigma$. Interested readers
can also refer to \cite{duncan2024first} and \cite{LiuSteklov} for more information about $\sigma_1$.

In this paper, we first provide some results related to Escobar's conjecture.
For convenience, we introduce the necessary notions.

Let $(\Omega^{n+1}, g)$ be an $(n+1)$-dimensional ($n\geq 2$) smooth compact connected Riemannian manifold with smooth boundary $\Sigma$ and $\rho=d(\cdot,\Sigma)$ be the distance function to $\Sigma$. We now recall the concept of cut point. For $p \in \Sigma$, we consider the geodesic $\gamma_{p}(t)=\exp_{p}(-t\nu(p))$ with arc length parameter. Then we know that the point $\gamma(t_0)$ is a cut point of $\Sigma$ if 
$$t_0=\sup\{t>0~|~\rho(\gamma_{p}(t))=t\}.$$
Let $\mbox{Cut}(\Sigma)$ be the set of all cut points of $\Sigma$. We then know that the set $\mbox{Cut}(\Sigma)$ has zero $(n+1)$-dimensional Hausdorff measure and the function $\rho$ is smooth on $\Omega \setminus \mbox{Cut}(\Sigma)$. In addition, $\forall ~x \in \Omega \setminus \mbox{Cut}(\Sigma)$, let $\gamma:[0,\rho(x)] \rightarrow \Omega$ be the minimizing geodesic with arc length parameter such that $t = \rho(\gamma(t))$ for $t \in [0,\rho(x)]$, where $\gamma(0)\in \Sigma$ and $\gamma(\rho(x))=x$, we know that this geodesic is unique. Base on this unique geodesic, we define $K(x)$ as
$$K(x)=\inf\{\mbox{Sec}_{\Omega}(\gamma^{\prime}(\rho(x))\wedge X)~|~X\in T_{x}\Omega, \,|X|=1,\, X \perp \gamma^{\prime}(\rho(x))\},$$
where $\mbox{Sec}_{\Omega}$ is the sectional curvature of $\Omega$. We call $K(x)$ the infimum of the radial sectional curvature at $x$. Let 
$$K = \inf\{K(x)~|~x \in \Omega \setminus \mbox{Cut}(\Sigma)\}.$$

Now, we can state our results. By constructing a new weight function $V$, using appropriate integral identities and combining Escobar's result, we first provide a general lower bound of $\sigma_{1}$ in terms of the infimum $K$ and a constant $c>0$ for $(n+1)$-dimensional $(n\geq2)$ compact manifolds which satisfy $\mbox{Ric}_{\Omega} \geq 0$ and $h \geq cg_{\Sigma} >0$. 

\begin{theorem}
Let $(\Omega^{n+1},g)$ be an $(n+1)$-dimensional $(n\geq2)$ smooth compact connected Riemannian manifold with smooth boundary $\Sigma$. Assume that $\mbox{Ric}_{\Omega} \geq 0$ and $h \geq cg_{\Sigma} >0$. Then the first nonzero Steklov eigenvalue $\sigma_1$ satisfies
$$
    \sigma_1 \geq\sigma(c,K),
    $$
where 
$$
    \sigma(c,K)=
    \left\{
    \begin{aligned}
    	&c,~~&K\geq0,\\
    	&c\cosh{\bigg(\frac{\sqrt{|K|}}{c}\bigg)}-\sqrt{|K|}\sinh{\bigg(\frac{\sqrt{|K|}}{c}\bigg)},~~&K_0\leq K \leq 0, \\
    	&\frac{c}{2},~~~&K\leq K_0,
    \end{aligned}
    \right.
    $$
and
$K_0 \in (-\infty,0)$ is the unique number which satisfies 
$$c\cosh{\bigg(\frac{\sqrt{|K_0|}}{c}\bigg)}-\sqrt{|K_0|}\sinh{\bigg(\frac{\sqrt{|K_0|}}{c}\bigg)}=\frac{c}{2}.$$ 
Moreover, the equality holds if and only if $\Omega$ is isometric to a Euclidean ball
of radius $\frac{1}{c}$ and in this case, $K=0$, $\sigma_1=\sigma(c,K)=c$.
\label{thm: 1.1}
\end{theorem}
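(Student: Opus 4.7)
The plan is to follow the strategy of Xia--Xiong~\cite{xia2023escobar}: construct a radial weight $V = f(\rho)$, plug it into the weighted Reilly-type formula of Qiu--Xia~\cite{qiu2015generalization} together with a Pohozaev-type identity from~\cite{xiong2018comparison}, and read off a lower bound on $\sigma_1$ from the pointwise geometry of $V$. The new ingredient is that $f$ must be designed so the final estimate captures the curvature lower bound $K$, not merely $\mathrm{Sec}_\Omega\ge 0$. Because Escobar's estimate $\sigma_1>c/2$ is always available, the task reduces to producing the improved bound in the intermediate range $K_0\le K<0$, which both explains the hyperbolic-function shape of $\sigma(c,K)$ and the definition of the threshold $K_0$.

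For the weight I would take $V=f(\rho)$ where $f$ solves a linear ODE adapted to $K$ and $c$. When $K\ge 0$ the Xia--Xiong choice $f(\rho)=\rho-\frac{c}{2}\rho^{2}$ already gives the sharp $\sigma_{1}\ge c$, so $f$ only needs modification for $K<0$. Motivated by the simply-connected comparison model of constant sectional curvature $K$ with a totally umbilic boundary of principal curvatures $c$, I would take $f$ to be the solution of an ODE of the form $f''-|K|f=-c$ with $f(0)=0$ and $f'(0)=1$, giving
\begin{equation*}
f'(\rho)=\cosh\!\bigl(\sqrt{|K|}\,\rho\bigr)-\frac{c}{\sqrt{|K|}}\sinh\!\bigl(\sqrt{|K|}\,\rho\bigr).
\end{equation*}
Evaluating at the ``critical radius'' $\rho=1/c$ (after a natural rescaling that normalises the boundary contribution in the Pohozaev identity) should produce precisely the expression $c\cosh(\sqrt{|K|}/c)-\sqrt{|K|}\sinh(\sqrt{|K|}/c)$ entering $\sigma(c,K)$. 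The threshold $K_0$ is where this quantity drops to $c/2$; for $K\le K_{0}$ one simply invokes Escobar's $\sigma_{1}>c/2$ and is done.

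Next, on $\Omega\setminus\mathrm{Cut}(\Sigma)$ the distance function $\rho$ is smooth, and the Hessian comparison theorem fed with both $h\ge cg_{\Sigma}$ and the radial lower bound $K(x)\ge K$ gives a pointwise estimate on the shape operators of the level sets $\{\rho=t\}$, hence on $\nabla^{2}\rho$. This translates into a pointwise inequality for $\nabla^{2}V=f''(\rho)\,d\rho\otimes d\rho+f'(\rho)\nabla^{2}\rho$ compatible with the chosen $f$. Substituting $V$ and the harmonic Steklov eigenfunction $u$ into the weighted Reilly formula, applying the Steklov condition $u_{\nu}=\sigma_{1}u$ on $\Sigma$, collapsing boundary terms via the Pohozaev identity, and estimating with Cauchy--Schwarz, should yield an inequality of the form
\begin{equation*}
\bigl(\sigma_{1}-\sigma(c,K)\bigr)\int_{\Sigma}u^{2}\,da\;\ge\;0,
\end{equation*}
from which the conclusion follows since $u\not\equiv 0$. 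The cut locus has zero $(n+1)$-dimensional Hausdorff measure, so integrating over $\Omega\setminus\mathrm{Cut}(\Sigma)$ is legitimate after a standard cut-off argument near $\mathrm{Cut}(\Sigma)$.

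The hard part is the design of $f$: the weight must simultaneously (i) vanish on $\Sigma$ with the correct normal derivative so that the boundary contribution in the weighted Reilly identity reproduces $\sigma_{1}\int_{\Sigma}u^{2}$ with the right coefficient, (ii) have $\nabla^{2}V$ whose pairings with $\nabla u\otimes\nabla u$ and $\nabla^{2}u$ admit sign-controlled bounds after inserting the Hessian comparison (whose direction depends on the sign of $K$), and (iii) reproduce equality exactly at the Euclidean ball of radius $1/c$. Once this $f$ is in hand, the rigidity statement follows by tracking equality cases: $K=0$ from saturation of the Hessian comparison, $\nabla^{2}u\equiv 0$ from saturation of Cauchy--Schwarz, and then $\Omega\cong B_{1/c}\subset\mathbb{R}^{n+1}$ as in the Xia--Xiong rigidity argument.
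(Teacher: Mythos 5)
Your proposal matches the paper's proof essentially step for step: the weight you propose (the solution of $f''-\lvert K\rvert f=-c$, $f(0)=0$, $f'(0)=1$) is exactly the paper's $V=\int_0^\rho\theta$ with $\theta''+\min\{0,K\}\theta=0$, $\theta(0)=1$, $\theta'(0)=-c$, and the combination of the weighted Reilly formula, the Pohozaev identity, the Hessian comparison for $\rho$, evaluation at $\rho_{\max}\le 1/c$, and the fall-back to Escobar's $\sigma_1>c/2$ for $K\le K_0$ is identical to the paper's argument. The only wrinkle is that the regularization across $\mathrm{Cut}(\Sigma)$ is carried out in the paper by a Greene--Wu Riemannian-convolution smoothing of $V$ that preserves the one-sided Hessian bound (rather than the ``standard cut-off'' you mention), and the rigidity is read off from $\int_\Omega V\lvert\nabla^2u\rvert^2\,dA=0$ rather than from saturation of a Cauchy--Schwarz step, but these are minor differences of bookkeeping, not of method.
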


\begin{remark}
(1) We point out that when $K\geq0$, Theorem \ref{thm: 1.1} is actually Xia and Xiong's result in \cite{xia2023escobar}; when $K\leq K_0$, Theorem \ref{thm: 1.1} is just Escobar's result in \cite{escobar1997geometry}. 

(2) We also point out that 
$$\lim_{K \rightarrow 0} \bigg[c\cosh{\bigg(\frac{\sqrt{|K|}}{c}\bigg)}-\sqrt{|K|}\sinh{\bigg(\frac{\sqrt{|K|}}{c}\bigg)}\bigg]=c,$$
$$\lim_{K \rightarrow -\infty} \bigg[c\cosh{\bigg(\frac{\sqrt{|K|}}{c}\bigg)}-\sqrt{|K|}\sinh{\bigg(\frac{\sqrt{|K|}}{c}\bigg)}\bigg]=-\infty$$
and the function $t\rightarrow c\cosh{(\frac{t}{c})-t}\sinh{(\frac{t}{c})}$ is strictly monotonically decreasing when $t>0$, so the number $K_0$ exists uniquely and our estimate can also be seen as an improvement on Escobar's result.

(3) In \cite{LiuSteklov}, the present authors actually showed that when $K <0$, 
$$
\sigma_1 >
    \left\{
    \begin{aligned}
    	&c+\frac{K}{c},~~&-\frac{c^2}{2}\leq K < 0, \\
    	&\frac{c}{2},~~~&K\leq -\frac{c^2}{2}.
    \end{aligned}
    \right.
$$
On the other hand, using the property that the function
$c\cosh{(\frac{t}{c})-t}\sinh{(\frac{t}{c})}$ in $t>0$ is strictly monotonically decreasing, 
a direct calculation shows that $K_0<-\frac{c^2}{2}$ and when $K\in [-\frac{c^2}{2},0)$,
$$
    c\cosh{\bigg(\frac{\sqrt{|K|}}{c}\bigg)}-\sqrt{|K|}\sinh{\bigg(\frac{\sqrt{|K|}}{c}\bigg)} > c+\frac{K}{c}.
    \label{ineq:a}
$$
Therefore, Theorem \ref{thm: 1.1} actually covers the result in \cite{LiuSteklov}. 
\end{remark}

Based on the conclusion of Theorem \ref{thm: 1.1}, we have the following rigidity result.

\begin{corollary}
Let $(\Omega^{n+1},g)$ be as in Theorem \ref{thm: 1.1} with $K \geq K_0$. Suppose there exists a non-constant harmonic function $f \in C^{\infty}(\Omega)$ and a positive continuous function $k \leq \sigma(c,K)$ on $\Sigma$ which satisfies 
$$\frac{\partial f}{\partial \nu}=kf.$$
Then $\Omega$ is isometric to a Euclidean ball
of radius $\frac{1}{c}$ and the function $k$ must be $c$.
\label{Cor: 1.2}
\end{corollary}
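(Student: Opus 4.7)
The plan is to sandwich $\sigma_1$ between the lower bound $\sigma(c,K)$ from Theorem \ref{thm: 1.1} and an upper bound obtained by testing the variational characterization against a mean-corrected version of $f$. As a preliminary observation, harmonicity of $f$ together with the divergence theorem and the boundary condition yield
$$\int_\Sigma k f\, da = \int_\Sigma \frac{\partial f}{\partial \nu}\, da = \int_\Omega \Delta f\, dA = 0.$$
Since $f$ is non-constant, Dirichlet uniqueness for harmonic functions gives $f|_\Sigma \not\equiv 0$, so this is a genuine weighted orthogonality; it is, however, not the ordinary mean-zero condition required by the variational characterization of $\sigma_1$. I therefore introduce $\bar f := \frac{1}{|\Sigma|}\int_\Sigma f\, da$ and the test function $h := f - \bar f$, which is harmonic, satisfies $\int_\Sigma h\, da = 0$, and has $\partial h/\partial\nu = kf$.

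The core computation is the boundary identity
$$\int_\Omega |\nabla h|^2\, dA = \int_\Sigma h\,\frac{\partial h}{\partial \nu}\, da = \int_\Sigma k f^2\, da = \int_\Sigma k h^2\, da - \bar f^{\,2}\int_\Sigma k\, da,$$
where the last step uses $\int_\Sigma kf\, da = 0$ when expanding $h^2 = (f-\bar f)^2$. Combining this with the variational inequality $\sigma(c,K)\int_\Sigma h^2\, da \le \int_\Omega |\nabla h|^2\, dA$ and the pointwise majorization $\int_\Sigma k h^2\, da \le \sigma(c,K)\int_\Sigma h^2\, da$ (which follows from $k \le \sigma(c,K)$) forces $\bar f^{\,2}\int_\Sigma k\, da \le 0$, whence $\bar f = 0$ since $k > 0$. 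Substituting $\bar f = 0$ back, every inequality in the chain collapses to equality, yielding $\sigma_1 = \sigma(c,K)$ and $\int_\Sigma (\sigma(c,K) - k)\, f^2\, da = 0$ with a nonnegative continuous integrand; hence $k(x) = \sigma(c,K)$ at every $x \in \Sigma$ with $f(x) \neq 0$.

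Once $\sigma_1 = \sigma(c,K)$ is established, the rigidity half of Theorem \ref{thm: 1.1} immediately forces $\Omega$ to be isometric to a Euclidean ball of radius $1/c$, so $K = 0$ and $\sigma(c,K) = c$. To upgrade the pointwise identity to $k \equiv c$, I will argue that $\{f \neq 0\}$ is dense in $\Sigma$: otherwise $f$ and $\partial_\nu f = kf$ would both vanish on a nonempty open subset of $\Sigma$, and unique continuation across the boundary for harmonic functions would force $f \equiv 0$ throughout $\Omega$, contradicting non-constancy. Continuity of $k$ then promotes the equality $k = c$ on the dense set $\{f \neq 0\}$ to $k \equiv c$ on all of $\Sigma$. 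The main obstacle is the boundary identity in the second paragraph: because $h$ fails to satisfy a clean Robin-type condition $\partial_\nu h = kh$, one must carry the correction term $\bar f^{\,2}\int_\Sigma k\, da$ through the computation, and it is precisely this term that simultaneously drives $\bar f = 0$ and triggers the subsequent rigidity.
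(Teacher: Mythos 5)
Your proposal is correct, and it takes a genuinely different route from the paper's. The paper reruns the integral-identity machinery of \S4 directly on $f$: substituting $\partial_\nu f = kf$ into (\ref{f:4.4}), applying $\nabla^2(-V_\epsilon)\ge(\sigma(c,K)-\epsilon)g$, and using $\int_\Omega|\nabla f|^2=\int_\Sigma kf^2$ yields (after $\epsilon\to 0$) the identity $0=\int_\Sigma(\sigma(c,K)-k)kf^2\,da+\int_\Omega V|\nabla^2 f|^2\,dA$, so that $\nabla^2 f\equiv 0$ and $k=\sigma(c,K)$ on $\{f\neq 0\}\cap\Sigma$. The paper then normalizes $|\nabla f|\equiv 1$ (legitimate since $\nabla^2f=0$ makes $|\nabla f|$ constant) and notes that on $\Sigma$, $|\nabla_\Sigma f|^2+k^2f^2=1$, so $f^{-1}(0)\cap\Sigma$ is a regular codimension-one level set, hence nowhere dense; continuity then gives $k\equiv\sigma(c,K)$, $f$ is a genuine $\sigma_1$-eigenfunction, and Theorem \ref{thm: 1.1}'s rigidity finishes. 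You instead treat Theorem \ref{thm: 1.1} entirely as a black box and argue variationally: the mean-corrected test function $h=f-\bar f$, the sandwich $\sigma(c,K)\int h^2\le\sigma_1\int h^2\le\int|\nabla h|^2=\int kh^2-\bar f^2\int k\le\sigma(c,K)\int h^2-\bar f^2\int k$, and positivity of $k$ force $\bar f=0$, hence equality throughout and $\sigma_1=\sigma(c,K)$; you then replace the paper's $\nabla^2 f=0$ density argument with boundary unique continuation (which is clean here since, after Theorem \ref{thm: 1.1} applies, $\Omega$ is a Euclidean ball and $f$ is real-analytic). Your approach is lighter in that it never touches the weight function $V_\epsilon$ again, at the cost of invoking unique continuation rather than extracting the extra information $\nabla^2 f=0$; the paper's route is more self-contained within its own toolkit. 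Both are valid; yours is a clean, more ``soft'' alternative.
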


The proof of Theorem \ref{thm: 1.1} is mainly based on the integral identities and a more general weight function $V$. Here we briefly introduce our idea. Let $(\Omega^{n+1}, g)$ be an $(n+1)$-dimensional ($n\geq 2$) smooth compact connected Riemannian manifold with smooth boundary $\Sigma$ and $\rho=d(\cdot,\Sigma)$ be the distance function to $\Sigma$. Let
$$\rho_{\max}=\max_{\Omega}\rho$$
and 
$f \in C([0,\rho_{\max}])$ be a continuous function. Suppose the Ricci curvature $\mbox{Ric}_{\Omega} \geq 0$, the infimum of the radial sectional curvature $K(x)\geq f(\rho(x))$, $\forall\, x \in \Omega \setminus \mbox{Cut}(\Sigma)$, and the second fundamental form $h \geq cg_{\Sigma} >0$. Let $\theta \in C^2 ([0,\rho_{\max}])$ be the unique solution of the following system
\begin{equation*}
\begin{cases}
\begin{aligned}
    	&\theta^{\prime \prime}+F\theta=0,\\
    	&\theta(0)=1,\\
        &\theta^{\prime}(0)=-c,
    \end{aligned}
\end{cases}
\end{equation*}
where 
$$F(t)=\min\{0,\min_{x\in [0,t]}f(x)\}$$
and $V(t)=\int_{0}^{t} \theta(s)ds$. Then the weight function (denote also by $V$) is given by
$$V(x)=V(\rho(x))=\int_{0}^{\rho(x)} \theta(s)ds.$$
It is elementary for us to show that $V \in C^3(\Omega \setminus \mbox{Cut}(\Sigma))$. Based on the curvature assumptions and the Hessian comparison theorem for $\rho$ (see Theorem 2.31 in \cite{kasue1982laplacian}), we can show that 
\begin{equation*}
\begin{aligned}
    	&\nabla^2(-V)|_x(X,X)\geq c+ (\widehat{F}V)(x)
    \end{aligned}
\end{equation*}
for $x\in \Omega \setminus \mbox{Cut}(\Sigma)$ and any unit $X \in T_x \Omega$, where $\widehat{F}=F\circ\rho$. 

It should be pointed out that the function $V$ can be regarded as a generalization of Xia and Xiong's weight function. In fact, if we assume that $f\equiv0$, it is elementary to show that 
$$V=\rho-\frac{c}{2}\rho^2.$$
In the proof of Theorem \ref{thm: 1.1}, by choosing $f\equiv K$, we can show that
$$
    V=
    \left\{
    \begin{aligned}
    	&\rho-\frac{c}{2}\rho^2,~~&K\geq0,\\
    	&\frac{1}{\sqrt{|K|}}\sinh(\sqrt{|K|}\rho)-\frac{c}
        {|K|}\cosh(\sqrt{|K|}\rho)+\frac{c}{|K|},~~& K < 0. 
    \end{aligned}
    \right.
    $$

To ensure that the weight function $V$ can be applied to the integral identities, we still need to consider a suitable approximation of $V$. By a similar argument of Xia and Xiong, we can show that there exists a Greene-Wu type approximation $V_{\epsilon} \in C^{3}(\Omega)$ of $V$ which satisfies $\nabla^2(-V_{\epsilon})\geq (c+ \widehat{F}V-\epsilon)g$ for any small $\epsilon>0$. We will present details about the weight function and its Greene-Wu type approximation in Section 3.

Here we also point out that if we strengthen the condition about the Ricci curvature, we can confirm Escobar's conjecture by the weight function and the integral identities. In fact, we have the following 

\begin{theorem}
Let $(\Omega^{n+1}, g)$ be an $(n+1)$-dimensional ($n\geq 2$) smooth compact connected Riemannian manifold with smooth boundary $\Sigma$ and $f \in C([0,\rho_{\max}])$ be a continuous function. 
Suppose 

$\bullet$ the Ricci curvature $\mbox{Ric}_{\Omega}\geq |\widehat{F}|g$,

$\bullet$ the infimum of the radial sectional curvature 
$K(x)\geq f(\rho(x))$, $\forall\, x \in \Omega \setminus \mbox{Cut}(\Sigma)$,

$\bullet$ the second fundamental form $h \geq cg_{\Sigma} >0$.

\noindent
Then the first nonzero Steklov eigenvalue $\sigma_1$ satisfies
$$
    \sigma_1 \geq c.
$$
Moreover, the equality holds if and only if $\Omega$ is isometric to a Euclidean ball
of radius $\frac{1}{c}$ and in this case $f\equiv0$.
\label{thm: 1.3}
\end{theorem}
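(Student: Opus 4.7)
The plan is to run the weighted Reilly/Pohozaev scheme of Qiu--Xia \cite{qiu2015generalization} and Xia--Xiong \cite{xia2023escobar} that underlies Theorem \ref{thm: 1.1}, but to exploit the strengthened Ricci hypothesis $\mbox{Ric}_{\Omega}\ge |\widehat{F}|g$ to cancel precisely the ``deficit'' $\widehat{F}V$ that appears in the Hessian estimate for the generalized weight $V$ constructed from $\theta''+F\theta=0$, $\theta(0)=1$, $\theta'(0)=-c$. Let $u$ be a first Steklov eigenfunction, so $\Delta u=0$ in $\Omega$, $\partial u/\partial\nu=\sigma_{1} u$ on $\Sigma$, and $\int_{\Sigma} u\,da=0$. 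Let $V_{\epsilon}\in C^{3}(\Omega)$ be the Greene--Wu regularization of $V$ constructed in Section 3, so that $V_{\epsilon}\to V$ uniformly on $\Omega$ and $\nabla^{2}(-V_{\epsilon})\ge (c+\widehat{F}V-\epsilon)g$ pointwise on $\Omega$.

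I apply the weighted Reilly-type formula of Qiu--Xia to $u$ with weight $V_{\epsilon}$. Using $\Delta u=0$ and transferring divergence terms to $\Sigma$, the bulk integrand can be arranged to contain the combination
$$V_{\epsilon}\,|\nabla^{2} u|^{2}\;-\;\nabla^{2} V_{\epsilon}(\nabla u,\nabla u)\;+\;V_{\epsilon}\,\mbox{Ric}_{\Omega}(\nabla u,\nabla u).$$
The Hessian estimate gives $-\nabla^{2} V_{\epsilon}(\nabla u,\nabla u)\ge (c+\widehat{F}V-\epsilon)|\nabla u|^{2}$, while the Ricci hypothesis gives $V_{\epsilon}\,\mbox{Ric}_{\Omega}(\nabla u,\nabla u)\ge V_{\epsilon}|\widehat{F}|\,|\nabla u|^{2}$. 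Since $F\le 0$ on $[0,\rho_{\max}]$ by construction, one has $|\widehat{F}|V_{\epsilon}=-\widehat{F}V_{\epsilon}\to -\widehat{F}V$ as $\epsilon\to 0$, so the two $\widehat{F}V$ contributions cancel exactly and the combination is bounded below by $c\,|\nabla u|^{2}+V\,|\nabla^{2} u|^{2}$. This cancellation is the sole purpose of the strengthened Ricci assumption, and it restores the sharp constant $c$ that is lost in Theorem \ref{thm: 1.1}.

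For the boundary contribution I couple the weighted Reilly formula to the Pohozaev-type identity of Xiong \cite{xiong2018comparison} applied to $V_{\epsilon}$, in the same fashion as in \cite{xia2023escobar}. Using $h\ge c\,g_{\Sigma}$, the prescribed Cauchy data $V|_{\Sigma}=0$, $V'(0)=1$, $V''(0)=-c$ of the weight on $\Sigma$, together with $\partial_{\nu} u=\sigma_{1} u$ and $\int_{\Sigma} u\,da=0$, the boundary integrand reduces in the limit $\epsilon\to 0$ to a multiple of $(\sigma_{1}-c)\int_{\Sigma} u^{2}\,da$ plus non-negative remainders. Combining with the bulk lower bound, one arrives at an inequality of the schematic form
$$0\;\le\;\int_{\Omega} V\,|\nabla^{2} u|^{2}\,dV\;+\;\sigma_{1}(\sigma_{1}-c)\int_{\Sigma} u^{2}\,da\;-\;(\text{non-negative remainder}),$$
from which $\sigma_{1}\ge c$ follows.

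The main technical obstacle is the analysis near $\mbox{Cut}(\Sigma)$, where $V$ is only Lipschitz and the pointwise Hessian estimate holds only in the distributional/comparison sense; this is precisely what forces the use of the Greene--Wu regularization $V_{\epsilon}$, and the justification of the limit $\epsilon\to 0$ follows the one in \cite{xia2023escobar}. For the rigidity statement, equality propagates through the chain of inequalities and forces $\nabla^{2} u\equiv 0$ on $\{V>0\}$, $(\sigma_{1}-c)u\equiv 0$ on $\Sigma$, equality in the Hessian comparison for $V$, and equality in the Ricci hypothesis on $\{V>0\}$; these conditions place us in the equality case of the Xia--Xiong rigidity and force $\Omega$ to be isometric to a Euclidean ball of radius $1/c$. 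Since that ball is flat, $\mbox{Sec}_{\Omega}\equiv 0$ gives $K=0$, while $\mbox{Ric}_{\Omega}\equiv 0\ge |\widehat{F}|g$ forces $\widehat{F}\equiv 0$; combined with $f\le K=0$ and the definition $F(t)=\min\{0,\min_{x\in[0,t]} f(x)\}$, this yields $f\equiv 0$, as claimed.
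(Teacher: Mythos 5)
Your proposal follows the paper's proof almost step for step: same generalized weight $V$ built from $\theta'' + F\theta = 0$, same Greene--Wu regularization $V_\epsilon$ with $\nabla^2(-V_\epsilon)\ge (c+\widehat F V - \epsilon)g$, same coupling of the weighted Reilly formula with the Pohozaev identity to arrive at $\int_\Sigma(\partial_\nu u)^2\,da = \int_\Omega(-\nabla^2 V_\epsilon(\nabla u,\nabla u)+V_\epsilon|\nabla^2 u|^2 + V_\epsilon\mathrm{Ric}_\Omega(\nabla u,\nabla u))\,dA$, and the same observation that the strengthened Ricci hypothesis $\mathrm{Ric}_\Omega\ge |\widehat F|g$ exactly cancels the $\widehat F V$ deficit (since $F\le 0$), restoring the sharp constant $c$; the rigidity via $\nabla^2 u\equiv 0$ and the Obata-type argument is also the paper's. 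The only loose ends are cosmetic: $\int_\Sigma u\,da=0$ is not needed (only $\partial_\nu u=\sigma_1 u$ and integration by parts are used), the boundary $h$-term drops out because $V|_\Sigma=0$ rather than via $h\ge cg_\Sigma$ (that hypothesis instead enters through $\rho_{\max}\le 1/c$ and $\theta\ge 1-c\rho\ge 0$), and the sign in your schematic final inequality should read $\sigma_1(\sigma_1-c)\int_\Sigma u^2\,da\ge\int_\Omega V|\nabla^2 u|^2\,dA\ge 0$.
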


\begin{remark}
Theorem \ref{thm: 1.3} can also be seen as a generalization of Xia and Xiong's result in \cite{xia2023escobar}. 
\end{remark}

 When $f$ is nonincreasing, we know that 
$$
F=\min\{0,f\}.
$$ 
By Theorem \ref{thm: 1.3}, we then have
\begin{corollary}
Let $(\Omega^{n+1}, g)$ be an $(n+1)$-dimensional ($n\geq 2$) smooth compact connected Riemannian manifold with smooth boundary $\Sigma$ and $f \in C([0,\rho_{\max}])$ be a nonincreasing continuous function. Suppose 

$\bullet$ the Ricci curvature $\mbox{Ric}_{\Omega}\geq |\min \{0,f\circ\rho\}|g$,

$\bullet$ the infimum of the radial sectional curvature 
$K(x)\geq f(\rho(x))$, $\forall\, x \in \Omega \setminus \mbox{Cut}(\Sigma)$,

$\bullet$ the second fundamental form $h \geq cg_{\Sigma} >0$.

\noindent
Then the first nonzero Steklov eigenvalue $\sigma_1$ satisfies
$$
    \sigma_1 \geq c.
$$
Moreover, the equality holds if and only if $\Omega$ is isometric to a Euclidean ball
of radius $\frac{1}{c}$ and in this case $f\equiv0$.
\label{Cor: 1.4}
\end{corollary}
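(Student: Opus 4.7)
The plan is to derive Corollary \ref{Cor: 1.4} as an immediate consequence of Theorem \ref{thm: 1.3}. The only thing I need to verify is that, under the monotonicity assumption on $f$, the general Ricci condition $\mbox{Ric}_{\Omega}\ge |\widehat F|g$ appearing in Theorem \ref{thm: 1.3} reduces to the simpler-looking hypothesis $\mbox{Ric}_{\Omega}\ge |\min\{0,f\circ\rho\}|g$ of the corollary.

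First, recall that in the setup of the general weight function we defined
$$F(t)=\min\bigl\{0,\ \min_{x\in[0,t]}f(x)\bigr\}.$$
If $f$ is nonincreasing on $[0,\rho_{\max}]$, then for every $t\in[0,\rho_{\max}]$ one has $\min_{x\in[0,t]}f(x)=f(t)$, and hence
$$F(t)=\min\{0,f(t)\}.$$
Composing with $\rho$ yields $\widehat F(x)=F(\rho(x))=\min\{0,f(\rho(x))\}=\min\{0,f\circ\rho\}(x)$, so in particular
$$|\widehat F|=|\min\{0,f\circ\rho\}|$$
pointwise on $\Omega$.

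Consequently, the Ricci assumption $\mbox{Ric}_{\Omega}\ge|\min\{0,f\circ\rho\}|g$ of Corollary \ref{Cor: 1.4} is literally the hypothesis $\mbox{Ric}_{\Omega}\ge|\widehat F|g$ of Theorem \ref{thm: 1.3}. Since the two remaining hypotheses (the radial sectional-curvature lower bound $K(x)\ge f(\rho(x))$ and the boundary convexity $h\ge cg_{\Sigma}$) are identical to those of Theorem \ref{thm: 1.3}, the conclusion $\sigma_1\ge c$ and the rigidity statement follow directly.

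There is essentially no obstacle in this argument; the only thing to check is the elementary identity $\min_{x\in[0,t]}f(x)=f(t)$ for $f$ nonincreasing, which translates the $\widehat F$ appearing in the generality of Theorem \ref{thm: 1.3} into the more transparent expression $\min\{0,f\circ\rho\}$ used in the statement of the corollary. Therefore the proof of Corollary \ref{Cor: 1.4} is a one-line reduction to Theorem \ref{thm: 1.3}.
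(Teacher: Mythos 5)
Your proof is correct and matches the paper's own argument exactly: the paper also notes that for nonincreasing $f$ one has $F=\min\{0,f\}$, and then invokes Theorem \ref{thm: 1.3} directly. Nothing to add.
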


\subsection{Lower bound estimate of the first nonzero Steklov eigenvalue 
under conformal transformations}

Based on the weight function introduced in the previous subsection, we can also 
provide a result on the first nonzero Steklov eigenvalue $\sigma_{1}$ under conformal 
transformations. We continue to use the constant $K$ defined in the previous subsection.

 Let $(\Omega^2,g)$ be a compact surface with smooth boundary $\Sigma$ and $\widehat{g}=e^{2f}g$ be a metric conformal to $g$, where $f \in C^{\infty}(\Omega)$ is a smooth function which satisfies $f|_{\Sigma}=0$. Denote by $\sigma_1(\widehat{g})$ the first nonzero Steklov eigenvalue with respect to the metric $\widehat{g}$. 
 
 In \cite{escobar1997geometry}, based on the fact that the Dirichlet integral is a conformal invariant, Escobar proved that if the Gaussian curvature $K_g$ is nonnegative and the geodesic curvature $k_{g}$ of the boundary satisfies $k_{g}\geq c>0$, then the first nonzero Steklov eigenvalue $\sigma_1(\widehat{g})$ satisfies $\sigma_1(\widehat{g})=\sigma_1 \geq c$. In fact, for any $u \in C^{\infty }(\Omega)$, we have
$$\frac{\int_{\Omega} |\nabla_{\widehat{g}} u|^2 dA_{\widehat{g}}}{\int_{\Sigma} u^2 da_{\widehat{g}}}=\frac{\int_{\Omega} |\nabla u|^2 dA}{\int_{\Sigma} u^2 da}.$$
The conclusion then follows from the maximum-minimum property and Escobar's conjecture for the two dimensional case.

Inspired by the result discussed above, we consider the lower bound estimate of the first nonzero Steklov eigenvalue under conformal transformations for manifolds with higher dimensions. Note that in this case the Escobar's method is ineffective, but fortunately, based on the weight function $V$, we have the following

\begin{theorem}
Let $(\Omega^{n+1}, g)$ be an $(n+1)$-dimensional ($n\geq 2$) smooth compact connected Riemannian manifold with smooth boundary $\Sigma$ which satisfies $\mbox{Ric}_{\Omega} \geq 0$ and $h \geq cg_{\Sigma} >0$. Let $f \in C^{\infty}(\Omega)$ be a smooth function which satisfies $f|_{\Sigma}=0$ and $\nabla^2 f \leq \frac{1}{n-1}\min\{0,K\}g$. Then the first nonzero Steklov eigenvalue $\sigma_1(\widehat{g})$ with respect to the metric $\widehat{g}=e^{2f}g$ satisfies
$$\sigma_1(\widehat{g})\geq c.$$
Moreover, the equality holds if and only if $(\Omega,g)$ is isometric to a Euclidean ball
of radius $\frac{1}{c}$ and $f\equiv0$.
\label{thm: 1.5}
\end{theorem}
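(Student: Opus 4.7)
The plan is to recast the $\widehat g$-Steklov problem as a drift-Laplacian Steklov problem on $(\Omega,g)$, so that the Hessian bound on $f$ manifests itself as a Bakry--\'Emery correction to the Ricci curvature, after which the weighted integral-identity machinery of Section~3 applies. Since $f|_\Sigma=0$, one has $\widehat\nu=\nu$ and $da_{\widehat g}=da$ on $\Sigma$; hence a first Steklov eigenfunction $u$ of $(\Omega,\widehat g)$ satisfies
\begin{equation*}
\Delta u+(n-1)g(\nabla f,\nabla u)=0 \ \mbox{in}\ \Omega,\qquad \frac{\partial u}{\partial\nu}=\sigma_1(\widehat g)\,u \ \mbox{on}\ \Sigma,
\end{equation*}
and testing against $u$ with the measure $e^{(n-1)f}\,dV_g$ yields $\sigma_1(\widehat g)\int_\Sigma u^2\,da=\int_\Omega e^{(n-1)f}|\nabla u|^2\,dV_g$, so it suffices to bound this Rayleigh quotient from below by $c$.

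Following the construction of Section~3, I would choose the weight function $V$ with the constant data $\widetilde f\equiv K$, so that $\widehat F\equiv \min\{0,K\}$ and $V$ enjoys $V|_\Sigma=0$, $\partial V/\partial\nu=-1$, together with $-\nabla^2 V\ge (c+\widehat F V)g$ on $\Omega\setminus\mbox{Cut}(\Sigma)$, and admits a Greene--Wu $C^3$-approximation $V_\varepsilon$ satisfying the same inequality up to $\varepsilon g$. Setting $\psi:=-(n-1)f$, the $\infty$-Bakry--\'Emery Ricci tensor associated with the drift Laplacian $L_\psi u:=\Delta u-g(\nabla\psi,\nabla u)=\Delta u+(n-1)g(\nabla f,\nabla u)$ is $\mathrm{Ric}_\psi^\infty:=\mbox{Ric}_\Omega+\nabla^2\psi=\mbox{Ric}_\Omega-(n-1)\nabla^2 f$, and the hypotheses $\mbox{Ric}_\Omega\ge 0$ and $\nabla^2 f\le\frac{1}{n-1}\min\{0,K\}g$ combine to yield
\begin{equation*}
\mathrm{Ric}_\psi^\infty\ge |\widehat F|\,g,
\end{equation*}
which is exactly the Ricci-type bound of Theorem~\ref{thm: 1.3}, now read in the Bakry--\'Emery sense. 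Since $h\ge cg_\Sigma$ is unchanged on the boundary and $V$ depends only on the $g$-geometry, all three ingredients needed to mimic the proof of Theorem~\ref{thm: 1.3} are in place.

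The last step is to apply the Bakry--\'Emery analogue of the weighted Reilly-type and Pohozaev-type identities used in the proof of Theorem~\ref{thm: 1.3}, now with the weight $V_\varepsilon$ and the weighted measure $e^{(n-1)f}\,dV_g$. Schematically, these combine into
\begin{align*}
\big(\sigma_1(\widehat g)-c\big)\int_\Sigma u^2\,da
&=\int_\Omega V_\varepsilon\, e^{(n-1)f}\Big[|\nabla^2 u|^2+\big(\mathrm{Ric}_\psi^\infty-|\widehat F|g\big)(\nabla u,\nabla u)\Big]dV_g\\
&\quad +\int_\Omega e^{(n-1)f}\Big[-\nabla^2 V_\varepsilon-(c+\widehat F V_\varepsilon)g\Big](\nabla u,\nabla u)\,dV_g+o(1),
\end{align*}
whose right-hand side is non-negative by the two bounds above, yielding $\sigma_1(\widehat g)\ge c$ after letting $\varepsilon\to 0$. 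Equality forces each non-negative integrand to vanish; arguing as in the Xia--Xiong rigidity scheme of \cite{xia2023escobar}, this together with $\nabla^2 f\le 0$ and $f|_\Sigma=0$ forces $f\equiv 0$ and $(\Omega,g)$ to be isometric to a Euclidean ball of radius $1/c$. The main obstacle is the careful derivation of this weighted Reilly--Pohozaev identity for $L_\psi$ under the measure $e^{(n-1)f}dV_g$: one must track how the drift term $(n-1)g(\nabla f,\nabla u)$ interacts with $V$ through the various integration-by-parts steps, so that all conformal corrections to $\mbox{Ric}$ consolidate cleanly into $\mathrm{Ric}_\psi^\infty$ while the boundary contributions assemble into the precise prefactor $\sigma_1(\widehat g)-c$ on $\int_\Sigma u^2\,da$.
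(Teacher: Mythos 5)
Your proposal follows essentially the same route as the paper: reduce the $\widehat g$-Steklov problem to a drift-Laplacian problem via $\Delta_{\widehat g}=e^{-2f}\mathbb{L}_{-(n-1)f}$, choose the same weight $V$ (data $f\equiv K$, $\widehat F\equiv\min\{0,K\}$) with its Greene--Wu approximation, feed it into the weighted Reilly-type and Pohozaev-type identities, and use the Bakry--\'Emery Ricci bound $\mathrm{Ric}_\Omega-(n-1)\nabla^2 f\ge|\widehat F|g$ to close the estimate, with rigidity via the Obata-type system of Xia--Xiong. The paper does not use the ``Bakry--\'Emery'' label but this is exactly the combination that appears in its inequality (5.6), so the two proofs coincide.
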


For manifolds with $K\geq0$, we know that $\min\{0,K\}=0$.
Then by Theorem \ref{thm: 1.5}, we have

\begin{corollary}
Let $(\Omega^{n+1}, g)$ be an $(n+1)$-dimensional ($n\geq 2$) smooth compact connected Riemannian manifold with smooth boundary $\Sigma$ which satisfies $\mbox{Ric}_{\Omega}\geq0$, $K\geq 0$ and $h \geq cg_{\Sigma} >0$. Let $f \in C^{\infty}(\Omega)$ be a concave function which satisfies $f|_{\Sigma}=0$.  Then the first nonzero Steklov eigenvalue $\sigma_1(\widehat{g})$ with respect to the metric $\widehat{g}=e^{2f}g$ satisfies
$$\sigma_1(\widehat{g})\geq c.$$
Moreover, the equality holds if and only if $(\Omega,g)$ is isometric to a Euclidean ball
of radius $\frac{1}{c}$ and $f\equiv0$.
\label{Cor: 1.6}
\end{corollary}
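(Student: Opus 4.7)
The plan is to reduce the problem to a weighted Steklov eigenvalue problem on $(\Omega, g)$ and then adapt the integral-identity machinery underlying Theorem~\ref{thm: 1.1}. Since $f|_\Sigma = 0$, the induced boundary metric, outward unit normal, and boundary area element of $\widehat g$ all coincide with those of $g$. Combining this with the conformal change of the Laplacian $\Delta_{\widehat g}u = e^{-2f}\bigl(\Delta u + (n-1)\langle \nabla f,\nabla u\rangle\bigr)$ and of the volume form $d\mathrm{vol}_{\widehat g} = e^{(n+1)f}\,dV$, the Rayleigh quotient reads
\[
\sigma_1(\widehat g) = \inf_{\int_\Sigma u\, da = 0}\frac{\int_\Omega e^{(n-1)f}|\nabla u|^2\,dV}{\int_\Sigma u^2\,da},
\]
and any associated eigenfunction satisfies $\operatorname{div}\bigl(e^{(n-1)f}\nabla u\bigr)=0$ in $\Omega$ with $\partial_\nu u = \sigma u$ on $\Sigma$. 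Writing $\varphi := -(n-1)f$ and $d\mu := e^{-\varphi}\,dV$, this is $\Delta_\varphi u = 0$ for the weighted Laplacian, and the Bakry--\'Emery Ricci tensor $\operatorname{Ric}_\varphi = \operatorname{Ric} - (n-1)\nabla^2 f$ satisfies, by the hypotheses of the theorem, $\operatorname{Ric}_\varphi \geq -\min\{0,K\}\,g \geq 0$.

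Next, I would build the weight function $V$ exactly as in Section~3 with the constant choice $f_* \equiv \min\{0,K\}$, so that $\theta$ solves $\theta'' + f_*\theta = 0$, $\theta(0)=1$, $\theta'(0)=-c$, and $V(x) = \int_0^{\rho(x)}\theta(s)\,ds$; its Greene--Wu regularization $V_\varepsilon \in C^3(\Omega)$ obeys the Hessian bound $\nabla^2(-V_\varepsilon) \geq (c + f_* V - \varepsilon)g$ together with $V_\varepsilon|_\Sigma = 0$ and $\partial_\nu V_\varepsilon|_\Sigma = -1$. The core step is to integrate $\nabla^2 V_\varepsilon(\nabla u,\nabla u)$ against the weighted measure $d\mu$, apply the Bochner identity for $\Delta_\varphi$
\[
\tfrac12\Delta_\varphi|\nabla u|^2 = |\nabla^2 u|^2 + \operatorname{Ric}_\varphi(\nabla u,\nabla u),
\]
in conjunction with $\Delta_\varphi u = 0$, and perform the weighted integration by parts. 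Boundary terms are evaluated using $\partial_\nu V = -1$, $\nabla^\Sigma V = 0$, $f|_\Sigma = 0$, $\partial_\nu u = \sigma u$, and $\nabla u|_\Sigma = \nabla^\Sigma u + \sigma u\,\nu$, producing $\tfrac12\sigma^2\int_\Sigma u^2\,da - \tfrac12\int_\Sigma |\nabla^\Sigma u|^2\,da$. On the bulk side, the Hessian lower bound contributes $\geq \int_\Omega(c + f_*V - \varepsilon)|\nabla u|^2\,d\mu$, while the Bakry--\'Emery Ricci term produces a compensating correction; together they cancel the unfavourable $f_*V$ contribution and leave a clean bulk lower bound $c\int_\Omega |\nabla u|^2\,d\mu = c\sigma\int_\Sigma u^2\,da$. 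Sending $\varepsilon \downarrow 0$ and rearranging then yields $\sigma \geq c$.

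The main obstacle will be the precise bookkeeping in the cancellation step: one must verify that the negative $f_*V\,|\nabla u|^2$ from the Hessian inequality and the Bakry--\'Emery Ricci contribution combine to a genuinely non-negative remainder rather than merely one dominating the other, and this is exactly where the factor $\frac{1}{n-1}$ in the hypothesis $\nabla^2 f \leq \frac{1}{n-1}\min\{0,K\}\,g$ is essential. For the rigidity statement, equality throughout forces the Hessian of $u$ to be pure trace, $\operatorname{Ric} \equiv 0$, $\nabla^2 f \equiv 0$, and the Hessian inequality for $V_\varepsilon$ to saturate; combined with $f|_\Sigma = 0$ this gives $f \equiv 0$, and the rigidity statement of Theorem~\ref{thm: 1.1} then identifies $(\Omega,g)$ with a Euclidean ball of radius $1/c$.
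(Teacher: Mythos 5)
Your proposal is structurally correct, but it misses that the paper's own proof of this corollary is a one-line reduction: when $K \geq 0$ we have $\min\{0,K\} = 0$, so the hypothesis $\nabla^2 f \leq \frac{1}{n-1}\min\{0,K\}\,g$ of Theorem~\ref{thm: 1.5} becomes exactly $\nabla^2 f \leq 0$, i.e.\ concavity of $f$, and the corollary follows immediately. What you do instead is re-derive the proof of Theorem~\ref{thm: 1.5} from scratch in the special case $\min\{0,K\}=0$ --- the conformal/weighted reformulation $\Delta_{\widehat g} = e^{-2f}\mathbb{L}_{-(n-1)f}$, the Bakry--\'Emery nonnegativity $\operatorname{Ric}^{-(n-1)f}_\Omega \geq 0$, the weight $V = \rho - \tfrac{c}{2}\rho^2$ with its Greene--Wu approximation, and the combination of the weighted Reilly-type and Pohozaev-type identities. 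That machinery is correct and matches Section~5 of the paper; the cancellation you flag as the ``main obstacle'' is genuinely what the $\frac{1}{n-1}$ hypothesis is for, though in this corollary $f_* = 0$ and no cancellation is actually needed.

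There is, however, a real gap in your rigidity step. Equality in the estimate forces $\nabla^2 u \equiv 0$, $\operatorname{Ric}(\nabla u, \nabla u) = 0$, and $\nabla^2 f(\nabla u,\nabla u) = 0$; it does \emph{not} directly force $\operatorname{Ric} \equiv 0$ or $\nabla^2 f \equiv 0$ in all directions, as you assert. The correct order of deductions is the reverse of yours: first, from $\nabla^2 u \equiv 0$ together with $\partial_\nu u = cu$ on $\Sigma$, apply the Obata-type rigidity (Proposition~4.3 in \cite{xia2023escobar}, not the rigidity clause of Theorem~\ref{thm: 1.1}, which is stated for the unweighted Steklov problem) to conclude that $(\Omega, g)$ is a Euclidean ball of radius $1/c$ with $u$ a linear coordinate, say $u = x_1$; only then does $\nabla^2 f(\nabla u,\nabla u)=0$ become $\partial_{x_1}^2 f = 0$, and this together with $f|_\Sigma = 0$ (which forces the affine-in-$x_1$ function $f$ to vanish at two symmetric boundary points on every vertical line) yields $f \equiv 0$.
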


We can also provide an example for the case that $\sigma_1(\widehat{g})>c$.
\begin{example}
    Let $\mathbb{B}^{n+1}$ be an $(n+1)$-dimensional unit ball in the Euclidean space $\mathbb{R}^{n+1}$. The boundary of $\mathbb{B}^{n+1}$ is an $n$-dimensional unit sphere $\mathbb{S}^n$. In this case, $c=1$. Then by Proposition 3.3 in \cite{xia2023escobar}, we know that there exists a smooth function $f$ on $\Omega$ which satisfies
    $$f|_{\mathbb{S}^n}=0$$
    and
    $$\nabla^2 f \leq -\frac{1}{2}g_{euc}<0,$$
    where $g_{euc}$ is the standard Euclidean metric.
    Let $\widehat{g}=e^{2f}g_{euc}$, we then know that 
    $$\sigma_{1}(\widehat{g})>1.$$
\end{example}

In addition to the weight function, the proof of Theorem \ref{thm: 1.5} also depends on the integral identities related to the Laplacian operator $\Delta_{\widehat{g}}$ with respect to the metric $\widehat{g}$. These identities are derived from the weighted Reilly-type formula, the Pohozaev-type identity and an observation that $\Delta_{\widehat{g}}=e^{-2f}\mathbb{L}_{-(n-1)f}$, where $\mathbb{L}_{-(n-1)f}$ is the weighted Laplacian operator which is defined by
$$\mathbb{L}_{-(n-1)f}=\Delta+(n-1)\langle\nabla f,\cdot\rangle.$$

Based on the fact that $\Delta_{\widehat{g}}=e^{-2f}\mathbb{L}_{-(n-1)f}$, we also point out that the first nonzero Steklov eigenvalue $\sigma_1(\widehat{g})$ is equal to the first nonzero Steklov-type eigenvalue $\tau_1$, where the Steklov-type eigenvalue problem is the following (see \cite{deng2020sharp})
\begin{equation*}
\begin{cases}
\begin{aligned}
    	&\mathbb{L}_{-(n-1)f} u=0, & \mbox{in} \,\Omega,\\
    	&\frac{\partial u}{\partial \nu}=\tau u, & \mbox{on} \, \Sigma.
\end{aligned}
\end{cases}
\end{equation*}
Our Theorem \ref{thm: 1.5} also shows that the first nonzero Steklov-type eigenvalue $\tau_1\geq c$.
Readers can refer to \cite{deng2020sharp} for the lower bound estimates of the first nonzero Steklov-type eigenvalue $\tau_1$.

The paper is organized as follows. In Section 2, we give some basic definitions 
and the integral identities which are needed later. Section 3 concentrates on the construction of the weight function and its Greene-Wu type approximation. In Section 4, we consider the lower bound of the first nonzero Steklov eigenvalue related to Escobar's conjecture and prove Theorems \ref{thm: 1.1} and \ref{thm: 1.3}. In Section 5, we consider the first nonzero Steklov eigenvalue under conformal transformations and provide the proof of Theorem \ref{thm: 1.5}.

\section{Preliminaries}

This section mainly introduces the necessary notions and the integral identities which are needed in the later proofs.

Let $(\Omega^{n+1},g)$ be an $(n + 1)$-dimensional smooth compact connected Riemannian manifold with smooth boundary $\partial \Omega=\Sigma$ and $g_{\Sigma}$ be
the induced metric on $\Sigma$; denote by $\langle \cdot, \cdot \rangle$ 
the inner product on $\Omega$ as well as $\Sigma$. Denote by
$\nabla^{\Omega}$, $\nabla$, $\Delta$, and $\nabla^2$ the connection, the gradient, the Laplacian, and the Hessian on $\Omega$ respectively, while by $\nabla_{\Sigma}$ 
and $\Delta_{\Sigma}$ the gradient and the Laplacian on $\Sigma$ respectively.
Let $\nu$ be the outward unit normal
vector on $\Sigma$. We denote by $h$
and $H$ the second fundamental form
and the mean curvature of $\Sigma$ with respect to $\nu$ respectively, where 
$$
h(X,Y)= -\langle \nabla^{\Omega}_X Y, \nu \rangle
$$ 
and 
$$
H=\mbox{tr}_g h.
$$
The principal curvatures of $\Sigma$ are defined to be the
eigenvalues of $h$.
Let $\mbox{Ric}_{\Omega}$ be the Ricci curvature tensor of $\Omega$.
Let $dA$ and $da$ be the canonical volume element of $\Omega$ and $\Sigma$ respectively.

Let $\phi$ be a smooth function on $\Omega$. Denote by $\mathbb{L}_{\phi}$ the weighted Laplacian operator on $\Omega$, while by $\mathbb{L}_{\phi}^{\Sigma}$ the weighted Laplacian operator on $\Sigma$, where
$$\mathbb{L}_{\phi}=\Delta-\langle \nabla \phi,\nabla\cdot \rangle$$
and
$$\mathbb{L}_{\phi}^{\Sigma}=\Delta_{\Sigma}-\langle \nabla_{\Sigma} \phi,\nabla_{\Sigma}\cdot \rangle.$$
Let $\mbox{Ric}^{\phi}_{\Omega}$ be the Bakry-Émery-Ricci tensor of $\Omega$, where 
$$\mbox{Ric}^{\phi}_{\Omega} = \mbox{Ric}_{\Omega} + \nabla^2 \phi.$$
We denote by $H_{\phi}$ the weighted mean curvature of $\Sigma$ with respect to $\nu$, where 
$$
H_{\phi}=H-\frac{\partial \phi}{\partial \nu}.
$$
Then $(\Omega^{n+1},g,e^{-\phi}dA)$ is often called a smooth metric measure space.
We refer interested readers to \cite{wei2009comparison} for more information about metric measure
spaces.

Now, we introduce the integral identities which will be used in our proofs. These formulas can be directly proven by standard calculations, readers can also refer to \cite{deng2020sharp}.

The first one is the following weighted Reilly-type formula for the weighted Laplacian operator. 

\begin{proposition}
Let $(\Omega^{n+1}, g)$ be an $(n+1)$-dimensional ($n\geq 2$) smooth compact connected Riemannian manifold with smooth boundary $\Sigma$, $V$ be a given a.e. twice differentiable function on $\Omega$ and $\phi$ be a smooth function on $\Omega$. Then for any smooth function $u$, we have 
\begin{equation}
\begin{aligned}
     &\int_{\Omega} V\bigg ((\mathbb{L}_{\phi} u)^2-|\nabla ^2 u|^2 \bigg) e^{-\phi}dA\\
           =&\int_{\Sigma} V\bigg [2(\mathbb{L}^{\Sigma}_{\phi} u) \frac{\partial u}{\partial \nu} + H_{\phi}\bigg(\frac{\partial u}{\partial \nu}\bigg)^2 + h(\nabla_{\Sigma}u, \nabla_{\Sigma}u)\bigg]e^{-\phi}da\\
           &+\int_{\Sigma} \frac{\partial V}{\partial \nu}|\nabla_{\Sigma} u|^2 e^{-\phi}da  \\
            &+\int_{\Omega}\bigg ( (\nabla^2 V - \mathbb{L}_{\phi} V g + V\mbox{Ric}^{\phi}_{\Omega})(\nabla u,\nabla u)\bigg)e^{-\phi}dA.
\label{f:2.1}
\end{aligned}
\end{equation}
\label{p:2.1}
\end{proposition}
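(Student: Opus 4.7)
The plan is to start from the weighted Bochner--Weitzenb\"ock identity
$$\frac{1}{2}\mathbb{L}_{\phi}|\nabla u|^2 = |\nabla^2 u|^2 + \langle\nabla u,\nabla\mathbb{L}_{\phi}u\rangle + \mbox{Ric}^{\phi}_{\Omega}(\nabla u,\nabla u),$$
multiply through by $Ve^{-\phi}$, integrate over $\Omega$, and perform successive integrations by parts to convert bulk derivatives of $u$ into terms involving $\nabla^2 V$, $\mathbb{L}_{\phi}V$, and $\mbox{Ric}^{\phi}_{\Omega}$, while exposing the correct boundary integrands on $\Sigma$. The basic tool is the weighted self-adjointness identity
$$\int_{\Omega}V\mathbb{L}_{\phi}w\,e^{-\phi}dA = \int_{\Omega}w\mathbb{L}_{\phi}V\,e^{-\phi}dA + \int_{\Sigma}\Bigl(V\frac{\partial w}{\partial\nu} - w\frac{\partial V}{\partial\nu}\Bigr)e^{-\phi}da,$$
which follows from the divergence theorem applied to the vector field $e^{-\phi}(V\nabla w - w\nabla V)$.

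For the bulk terms, I will first expand $\int_{\Omega}V(\mathbb{L}_{\phi}u)^{2}e^{-\phi}dA$ by one integration by parts, moving one factor of $\mathbb{L}_{\phi}u$ onto $Vu$, to produce $-\int_{\Omega}\langle\nabla V,\nabla u\rangle\mathbb{L}_{\phi}u\,e^{-\phi}dA$, $-\int_{\Omega}V\langle\nabla u,\nabla\mathbb{L}_{\phi}u\rangle e^{-\phi}dA$, and the boundary term $\int_{\Sigma}V\mathbb{L}_{\phi}u\frac{\partial u}{\partial\nu}e^{-\phi}da$. A second integration by parts on the $\langle\nabla V,\nabla u\rangle\mathbb{L}_{\phi}u$ piece uses $\langle\nabla\langle\nabla V,\nabla u\rangle,\nabla u\rangle = \nabla^{2}V(\nabla u,\nabla u) + \frac{1}{2}\langle\nabla V,\nabla|\nabla u|^{2}\rangle$, followed by one more application of the weighted divergence identity to $\langle\nabla V,\nabla|\nabla u|^{2}\rangle$; this produces $\int_{\Omega}\nabla^{2}V(\nabla u,\nabla u)e^{-\phi}dA$ together with $-\frac{1}{2}\int_{\Omega}\mathbb{L}_{\phi}V\cdot|\nabla u|^{2}e^{-\phi}dA$ and associated boundary pieces. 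In parallel, multiplying Bochner by $V$, integrating, and applying self-adjointness to $\frac{1}{2}\int_{\Omega}V\mathbb{L}_{\phi}|\nabla u|^{2}e^{-\phi}dA$ produces a second $\frac{1}{2}\int_{\Omega}\mathbb{L}_{\phi}V\cdot|\nabla u|^{2}e^{-\phi}dA$. Subtracting, the $\langle\nabla u,\nabla\mathbb{L}_{\phi}u\rangle$ contributions cancel and the bulk integrand collapses precisely to $(\nabla^{2}V - \mathbb{L}_{\phi}V\,g + V\mbox{Ric}^{\phi}_{\Omega})(\nabla u,\nabla u)$.

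For the boundary analysis, I will invoke the standard tangential--normal decompositions on $\Sigma$: $|\nabla u|^{2} = |\nabla_{\Sigma}u|^{2} + (\frac{\partial u}{\partial\nu})^{2}$, $\langle\nabla V,\nabla u\rangle = \langle\nabla_{\Sigma}V,\nabla_{\Sigma}u\rangle + \frac{\partial V}{\partial\nu}\frac{\partial u}{\partial\nu}$, the trace decomposition $\mathbb{L}_{\phi}u = \mathbb{L}_{\phi}^{\Sigma}u + H_{\phi}\frac{\partial u}{\partial\nu} + \nabla^{2}u(\nu,\nu)$, the expansion $\frac{1}{2}\frac{\partial|\nabla u|^{2}}{\partial\nu} = \nabla^{2}u(\nu,\nabla_{\Sigma}u) + \frac{\partial u}{\partial\nu}\nabla^{2}u(\nu,\nu)$, and the shape-operator identity $\nabla^{2}u(\nu,X) = X(\frac{\partial u}{\partial\nu}) - h(X,\nabla_{\Sigma}u)$ for tangential $X$. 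The $\nabla^{2}u(\nu,\nu)\frac{\partial u}{\partial\nu}$ pieces cancel in the combination $V\mathbb{L}_{\phi}u\frac{\partial u}{\partial\nu} - \frac{V}{2}\frac{\partial|\nabla u|^{2}}{\partial\nu}$, yielding $V\mathbb{L}_{\phi}^{\Sigma}u\frac{\partial u}{\partial\nu} + VH_{\phi}(\frac{\partial u}{\partial\nu})^{2} + Vh(\nabla_{\Sigma}u,\nabla_{\Sigma}u) - V\langle\nabla_{\Sigma}u,\nabla_{\Sigma}(\frac{\partial u}{\partial\nu})\rangle$. Combining the last summand with the stray $-\langle\nabla_{\Sigma}V,\nabla_{\Sigma}u\rangle\frac{\partial u}{\partial\nu}$ arising from splitting $\langle\nabla V,\nabla u\rangle$ gives $-\langle\nabla_{\Sigma}u,\nabla_{\Sigma}(V\frac{\partial u}{\partial\nu})\rangle$, and a weighted integration by parts on the closed hypersurface $\Sigma$ converts this into $V\frac{\partial u}{\partial\nu}\mathbb{L}_{\phi}^{\Sigma}u$, producing the factor $2$ in the target identity. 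The $(\frac{\partial u}{\partial\nu})^{2}\frac{\partial V}{\partial\nu}$ contributions from the two sources cancel, leaving only $\frac{\partial V}{\partial\nu}|\nabla_{\Sigma}u|^{2}$. The main obstacle is the careful algebraic bookkeeping on the boundary to track all cross-terms and confirm that the coefficients match; the low regularity of $V$ (only a.e.\ twice differentiable) is handled in the standard way by mollifying $V$, proving the identity in the smooth case, and passing to the limit.
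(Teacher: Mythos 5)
Your proposal is correct, and it carries out precisely the ``standard calculation'' the paper alludes to by citing Deng--Ma--Zhang: start from the weighted Bochner formula, integrate against $Ve^{-\phi}$, pair it with the double integration by parts of $\int V(\mathbb{L}_\phi u)^2 e^{-\phi}dA$ via the weighted Green identity, and massage the resulting boundary integrand using the trace decomposition of $\mathbb{L}_\phi u$, the normal derivative of $|\nabla u|^2$, and the shape-operator identity, with a final tangential integration by parts on the closed hypersurface $\Sigma$ producing the factor $2$. I verified the cancellations (the $\langle\nabla u,\nabla\mathbb{L}_\phi u\rangle$ and $\nabla^2 u(\nu,\nu)\partial_\nu u$ terms, the $\partial_\nu V(\partial_\nu u)^2$ pieces) and the collapse of the bulk to $(\nabla^2 V-\mathbb{L}_\phi V\,g+V\mathrm{Ric}^\phi_\Omega)(\nabla u,\nabla u)$; the mollification remark handles the low regularity of $V$ appropriately.
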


\begin{remark}
We point out that when $V\equiv 1$, (\ref{f:2.1}) is the Reilly-type formula for the weighted Laplacian operator (see \cite{ma2010extension}); when $\phi=$ const,  (\ref{f:2.1}) is the weighted Reilly-type formula in \cite{qiu2015generalization}; when $V\equiv 1$ and $\phi=$ const, (\ref{f:2.1}) is just the classical Reilly's formula (see \cite{reilly1977applications}).
\end{remark}

The second one is the following Pohozaev-type identity for the weighted Laplacian operator. 

\begin{proposition}
Let $(\Omega^{n+1}, g)$ be an $(n+1)$-dimensional ($n\geq 2$) smooth compact connected Riemannian manifold with smooth boundary $\Sigma$, $X$ be a Lipschitz continuous vector field on $\Omega$ and $\phi$ be a smooth function on $\Omega$. Then for any smooth function $u$ which satisfies $\mathbb{L}_{\phi}u=0$, we have 
\begin{equation}
\begin{aligned}
           &\int_{\Omega} 
           \bigg (\langle \nabla^{\Omega}_{\nabla u}X,\nabla u \rangle -\frac{1}{2}|\nabla u|^2 \mbox{div}_{\phi}(X) \bigg) e^{-\phi}dA\\
           =&\int_{\Sigma} \bigg (\frac{\partial u}{\partial \nu}\langle X,\nabla u \rangle -\frac{1}{2}|\nabla u|^2 \langle X, \nu \rangle \bigg)e^{-\phi} da,
\end{aligned}
\label{f:2.2}
\end{equation}
where $\mbox{div}_{\phi}=\mbox{div}-\langle \nabla \phi,\cdot \rangle$ denotes the weighted divergence operator on $\Omega$.
\label{p:2.2}
\end{proposition}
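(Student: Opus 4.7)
The plan is to reduce the Pohozaev-type identity to a single application of the weighted divergence theorem applied to a carefully chosen auxiliary vector field. Specifically, I would introduce
$$Y = \langle X, \nabla u\rangle \nabla u - \frac{1}{2}|\nabla u|^2 X,$$
which is tailored so that $\langle Y, \nu\rangle$ on $\Sigma$ reproduces precisely the boundary integrand $\frac{\partial u}{\partial \nu}\langle X, \nabla u\rangle - \frac{1}{2}|\nabla u|^2 \langle X, \nu\rangle$ on the right-hand side of (\ref{f:2.2}).

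The core pointwise calculation is to establish
$$\mbox{div}_{\phi}(Y) = \langle X, \nabla u\rangle \, \mathbb{L}_{\phi} u + \langle \nabla^{\Omega}_{\nabla u} X, \nabla u\rangle - \frac{1}{2}|\nabla u|^2 \, \mbox{div}_{\phi}(X).$$
To verify this, I would expand the ordinary divergence of each summand of $Y$. The product rule applied to the first summand gives
$$\mbox{div}(\langle X, \nabla u\rangle \nabla u) = \langle X, \nabla u\rangle \Delta u + \nabla^2 u(X, \nabla u) + \langle \nabla^{\Omega}_{\nabla u} X, \nabla u\rangle,$$
where the middle term comes from differentiating $\nabla u$ along $X$. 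For the second summand, the identity $\frac{1}{2}\nabla|\nabla u|^2 = \nabla^2 u(\nabla u, \cdot)$ yields
$$\mbox{div}\left(\frac{1}{2}|\nabla u|^2 X\right) = \nabla^2 u(X, \nabla u) + \frac{1}{2}|\nabla u|^2 \, \mbox{div}(X).$$
The Hessian contributions cancel upon subtraction. I would then convert to the weighted divergence through $\mbox{div}_{\phi}(Z) = \mbox{div}(Z) - \langle \nabla\phi, Z\rangle$, observing that the additional terms $-\langle X, \nabla u\rangle\langle \nabla\phi, \nabla u\rangle$ and $\frac{1}{2}|\nabla u|^2 \langle \nabla\phi, X\rangle$ assemble exactly into $\langle X, \nabla u\rangle\mathbb{L}_{\phi}u$ and $\frac{1}{2}|\nabla u|^2 \mbox{div}_{\phi}(X)$.

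Once this pointwise identity is in place, the hypothesis $\mathbb{L}_{\phi} u = 0$ eliminates the first term on the right, and the weighted divergence theorem
$$\int_{\Omega} \mbox{div}_{\phi}(Y) e^{-\phi} dA = \int_{\Sigma} \langle Y, \nu\rangle e^{-\phi} da,$$
together with the explicit computation of $\langle Y, \nu\rangle$ on $\Sigma$, delivers (\ref{f:2.2}) at once.

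The main technical obstacle is the Lipschitz regularity of $X$: the covariant derivative $\nabla^{\Omega}_{\nabla u}X$ and the divergence $\mbox{div}(X)$ are defined only almost everywhere, while the classical pointwise product rule and the divergence theorem require $C^{1}$ vector fields. I would resolve this by a standard mollification argument, approximating $X$ by smooth fields $X_{\epsilon}$ with $X_{\epsilon}\to X$ uniformly and $\nabla X_{\epsilon}\to \nabla X$ in $L^{p}$ for every $p<\infty$, establishing the identity for each $X_{\epsilon}$ via the classical divergence theorem, and then passing to the limit by dominated convergence, using that $u$ is smooth on $\overline{\Omega}$ so that $|\nabla u|$ and $|\nabla^{2} u|$ are bounded and that the boundary traces of $X_{\epsilon}$ converge uniformly to those of $X$.
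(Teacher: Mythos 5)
Your proposal is correct and follows precisely the standard calculation the paper alludes to: define $Y=\langle X,\nabla u\rangle\nabla u-\tfrac{1}{2}|\nabla u|^2X$, verify the pointwise identity $\mbox{div}_{\phi}(Y)=\langle X,\nabla u\rangle\mathbb{L}_{\phi}u+\langle\nabla^{\Omega}_{\nabla u}X,\nabla u\rangle-\tfrac{1}{2}|\nabla u|^2\mbox{div}_{\phi}(X)$, and integrate via the weighted divergence theorem. The observation that the two Hessian terms $\nabla^2u(X,\nabla u)$ cancel is exactly the point, and the mollification remark handles the Lipschitz regularity of $X$ appropriately.
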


\begin{remark}
We point out that when $\phi=$ const, (\ref{f:2.2}) is the Pohozaev-type identity in \cite{xiong2018comparison}.
\end{remark}

\section{The construction of the weight function and its Greene-Wu type approximation}
In this section, we concentrate on the construction of the weight function and its Greene-Wu type approximation. 

Let $(\Omega^{n+1}, g)$ be an $(n+1)$-dimensional ($n\geq 2$) smooth compact connected Riemannian manifold with smooth boundary $\Sigma$ and $\rho=d(\cdot,\Sigma)$ be the distance function to $\Sigma$. Let
$$\rho_{\max}=\max_{\Omega}\rho$$
and 
$f \in C([0,\rho_{\max}])$ be a continuous function. 
$\forall \, x \in \Omega \setminus \mbox{Cut}(\Sigma)$, we have defined the infimum of the radial sectional curvature $K(x)$.

In this section, we always assume that the Ricci curvature $\mbox{Ric}_{\Omega} \geq 0$, the infimum of the radial sectional curvature $K(x)\geq f(\rho(x))$, $\forall \, x \in \Omega \setminus \mbox{Cut}(\Sigma)$, and the second fundamental form $h \geq cg_{\Sigma} >0$. 
We will provide our weight function on the manifold $\Omega$ discussed above.

We first point out that based on the curvature assumptions, we have (see \cite{MR3404748})
\begin{equation}
    \rho_{\max} \leq \frac{1}{c}.
    \label{ineq:6}
\end{equation}
Define $F$ by 
$$F(t)=\min\{0,\min_{x\in [0,t]}f(x)\},$$
we then know that $F$ is nonpositive, nonincreasing and continuous on $[0,\rho_{\max}]$ and $K(x)\geq F(\rho(x))$, $\forall \, x \in \Omega \setminus \mbox{Cut}(\Sigma)$. 
Let $\theta \in C^2 ([0,\rho_{\max}])$ be the unique solution of the following system
\begin{equation*}
\begin{cases}
\begin{aligned}
    	&\theta^{\prime \prime}+F\theta=0,\\
    	&\theta(0)=1,\\
        &\theta^{\prime}(0)=-c.
    \end{aligned}
\end{cases}
\end{equation*}
We then have
\begin{proposition}
    $\forall \, t \in [0,\rho_{\max}]$, the function $\theta$ satisfies
    \begin{equation}
        \theta (t) \geq 1-ct.
        \label{ineq:7}
    \end{equation}
    \label{prop:3.1}
\end{proposition}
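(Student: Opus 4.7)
The plan is to compare $\theta$ with the affine function $\theta_0(t):=1-ct$, which is the unique solution of the analogous initial value problem corresponding to $F\equiv 0$. Since the constructed $F$ is nonpositive, a Sturm-type comparison should give $\theta\geq\theta_0$.

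Concretely, I would set $\psi(t):=\theta(t)-(1-ct)$, so that $\psi(0)=0$, $\psi'(0)=0$, and
$$\psi''(t)=\theta''(t)=-F(t)\,\theta(t).$$
Since $F\leq 0$, the task reduces to proving $\theta\geq 0$ on $[0,\rho_{\max}]$: once this is in hand, $\psi''\geq 0$, and combined with the vanishing initial data at $0$ this forces $\psi$ to be convex with $\psi\geq 0$, which is precisely \eqref{ineq:7}.

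To establish positivity of $\theta$, I would run a first-zero/bootstrap argument. Let $t_\ast$ denote the first zero of $\theta$ in $(0,\rho_{\max}]$, should one exist. On $[0,t_\ast]$ we have $\theta\geq 0$, and hence $\psi''\geq 0$ there; convexity together with $\psi(0)=\psi'(0)=0$ yields $\theta(t)\geq 1-ct$ on $[0,t_\ast]$. Evaluating at $t=t_\ast$ gives $0=\theta(t_\ast)\geq 1-ct_\ast$, whence $t_\ast\geq 1/c$. But the a priori bound \eqref{ineq:6}, namely $\rho_{\max}\leq 1/c$, forces $t_\ast\geq\rho_{\max}$, so in fact $\theta>0$ on $[0,\rho_{\max})$ with $\theta(\rho_{\max})\geq 0$. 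With this positivity in hand, the convexity argument runs on the whole interval and completes the proof.

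The main subtle point is the apparent circularity: convexity of $\psi$ requires $\theta\geq 0$, which itself is most naturally obtained from $\psi\geq 0$. The diameter bound $\rho_{\max}\leq 1/c$, which comes from the strict convexity of $\Sigma$, is exactly what breaks this loop, since it forces any hypothetical first zero of $\theta$ to lie outside $[0,\rho_{\max}]$; without this geometric input the comparison could fail near $t=1/c$ where the benchmark $1-ct$ itself becomes zero.
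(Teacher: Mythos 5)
Your proof is correct and follows essentially the same route as the paper's: a first-zero bootstrap closed by the diameter bound $\rho_{\max}\le 1/c$. The only difference is the comparison step itself — the paper runs a Wronskian/Riccati argument on $(\theta'\widehat{\theta}-\widehat{\theta}'\theta)$ and integrates $\theta'/\theta\ge\widehat{\theta}'/\widehat{\theta}$, whereas you observe directly that $\psi=\theta-(1-ct)$ satisfies $\psi''=-F\theta\ge 0$ and $\psi(0)=\psi'(0)=0$, which is a slightly more elementary way to reach the same inequality.
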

\begin{proof}
    Let $\widehat{\theta}=1-ct$, we then have
    \begin{equation*}
\begin{cases}
\begin{aligned}
    	&\widehat{\theta}^{\prime \prime}=0,\\
    	&\widehat{\theta}(0)=1,\\
        &\widehat{\theta}^{\prime}(0)=-c.
    \end{aligned}
\end{cases}
\end{equation*}
Suppose $\min \theta<0$, let $\theta_0 \in (0,\rho_{\max})$ be the first zero point of $\theta$, then on the interval $[0,\theta_0)$, we have
$$(\theta^{\prime}\widehat{\theta}-\widehat{\theta}^{\prime}\theta)^{\prime}=\theta^{\prime\prime}\widehat{\theta}-\widehat{\theta}^{\prime\prime}\theta=-F\theta\widehat{\theta}\geq0,$$
which shows
\begin{equation}
    \frac{\theta^{\prime}}{\theta}\geq \frac{\widehat{\theta}^{\prime}}{\widehat{\theta}}.
    \label{ineq:8}
\end{equation}
By the inequality (\ref{ineq:8}) and the initial conditions, we conclude that
\begin{equation*}
    \theta (t) \geq 1-ct,~\forall\,t\in [0,\theta_0].
\end{equation*}
We thus have $\theta_0 \geq \frac{1}{c}\geq \rho_{\max}$,
which is a contradiction. We then finish the proof.
\end{proof}

Let $V(t)=\int_{0}^{t} \theta(s)ds$ and we define the weight function (denote also by $V$) as
$$V(x)=V(\rho(x))=\int_{0}^{\rho(x)} \theta(s)ds.$$
It is easy for us to show that $V$ is Lipschitz continuous on $\Omega$ and $V \in C^3(\Omega \setminus \mbox{Cut}(\Sigma))$. 
By Proposition \ref{prop:3.1}, we also conclude that $V > 0$ in $\Omega^{\circ}$ and $V^{\prime}(\rho)\geq1-c\rho \geq0$. 
In addition, a direct calculation shows
$$V|_{\Sigma}=0$$
and
$$\frac{\partial V}{\partial \nu}|_{\Sigma} =-\theta(0)=-1.$$

For this weight function $V$, we also have
 
\begin{proposition}
Let $V$ be the weight function discussed above, then $\forall ~x \in \Omega$ and $X \in T_{x}\Omega$ with $|X|=1$, we have
\begin{equation}
      C(-V(\rho))(x;X) \geq c+ (\widehat{F}V)(x),
\label{ineq:9}
\end{equation}
where 
$$ C(-V(\rho))(x;X)=\liminf_{r \rightarrow 0} \frac{-V(\rho(\exp_{x}(rX))) -V(\rho(\exp_{x}(-rX))) +2V(\rho(x))}{r^2}$$
and $\widehat{F}=F\circ\rho$.
In particular, if $x \in \Omega \setminus \mbox{Cut}(\Sigma)$, we also have
$$\nabla^2 (-V)|_{x}(X,X)\geq c+ (\widehat{F}V)(x).$$
\label{prop:3.2}
\end{proposition}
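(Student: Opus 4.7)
The plan is to first establish the pointwise Hessian inequality on the smooth region $\Omega \setminus \mathrm{Cut}(\Sigma)$, and then pass to the cut locus via a comparison with the smooth distance to a foot point on $\Sigma$. On the smooth region the two inequalities in the statement coincide (the liminf central second difference reduces to the Hessian), so it suffices to bound $\nabla^2(-V(\rho))(X,X)$ directly. The chain rule gives
$$
\nabla^2(-V(\rho))(X,X) = -\theta'(\rho)\,\langle\nabla\rho, X\rangle^2 - \theta(\rho)\,\nabla^2\rho(X,X),
$$
and decomposing $X = X^{\parallel} + X^{\perp}$ relative to $\nabla\rho$ discards the radial part of $\nabla^2\rho$, since $|\nabla\rho| \equiv 1$ forces $\nabla^2\rho(\nabla\rho, \cdot) = 0$. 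I would then invoke Kasue's Hessian comparison theorem for the distance to a hypersurface: the shape operator along a minimizing normal geodesic satisfies a matrix Riccati equation with initial datum $-h \le -c\,\mathrm{Id}$, and under the radial curvature hypothesis $K \geq f \circ \rho$ (hence $\geq F \circ \rho$) a standard ODE comparison gives $\nabla^2\rho(X^{\perp}, X^{\perp}) \leq (\theta'/\theta)(\rho)\,|X^{\perp}|^2$. The key identification is that $\mu := \theta'/\theta$ solves the model Riccati $\mu' + \mu^2 + F = 0$ with $\mu(0) = -c$, exactly matching the hypotheses on $\theta$. Since $\theta(\rho) \geq 1 - c\rho \geq 0$ by Proposition~\ref{prop:3.1}, multiplying by $-\theta(\rho)$ flips the inequality cleanly and yields $\nabla^2(-V(\rho))(X,X) \geq -\theta'(\rho)$ for any unit $X$.

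The next step is purely ODE-theoretic: I will verify $-\theta'(\rho) \geq c + F(\rho)V(\rho)$. Integrating $\theta'' = -F\theta$ on $[0, \rho]$ gives $-\theta'(\rho) = c + \int_0^{\rho} F(s)\theta(s)\,ds$. Because $F$ is nonincreasing on $[0, \rho_{\max}]$ with $F \leq 0$ and $\theta \geq 0$, I can bound the integrand pointwise by $F(\rho)\theta(s)$, whose integral is $F(\rho)V(\rho) = (\widehat{F}V)(x)$. This closes the bound at every $x \in \Omega \setminus \mathrm{Cut}(\Sigma)$.

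The main obstacle is the extension to $x \in \mathrm{Cut}(\Sigma)$, where $\rho$ fails to be $C^2$ and only the liminf form $C(-V(\rho))(x;X)$ is available. The approach is to fix any foot point $p \in \Sigma$ with $d(p,x) = \rho(x)$ and compare $\rho$ with the auxiliary function $\tilde{\rho}(y) := d(p, y)$: one has $\rho \leq \tilde{\rho}$ globally with equality at $x$, and since $V$ is nondecreasing this gives $-V(\rho) \geq -V(\tilde{\rho})$ with equality at $x$, whence $C(-V(\rho))(x;X) \geq C(-V(\tilde{\rho})) (x;X)$. A Jacobi-field calculation along the minimizing geodesic $\gamma_p$, together with the same Riccati comparison against the model $\theta$, would then recover the desired lower bound. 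The genuine delicacy—handling the focal-type singularity where $\tilde{\rho}$ itself may fail to be smooth at $x$—is the reason the paper promises a Greene--Wu type mollification $V_\epsilon$ satisfying $\nabla^2(-V_\epsilon) \geq (c + \widehat{F}V - \epsilon)g$ in the following result, and I would invoke that approximation (passing $\epsilon \to 0$ in the central difference quotient) to close the cut-locus case in full generality.
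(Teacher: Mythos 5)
Your argument on the smooth region $\Omega \setminus \mathrm{Cut}(\Sigma)$ is correct and essentially identical to the paper's: the chain rule, the tangential/radial decomposition, the Riccati comparison giving $\nabla^2\rho(X^\perp,X^\perp)\le(\theta'/\theta)(\rho)\,|X^\perp|^2$, and the ODE step $-\theta'(\rho)=c+\int_0^\rho F\theta\,ds\ge c+F(\rho)V(\rho)$ (using that $F$ is nonincreasing, $F\le 0$, $\theta\ge 0$) are all present in the paper, where they appear compressed into a single application of Kasue's Theorem 2.31.

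The cut-locus step, however, has a genuine gap, in fact two. First, comparing $\rho$ with $\tilde\rho=d(p,\cdot)$ for a foot point $p\in\Sigma$ does give the one-sided inequality $C(-V(\rho))(x;X)\ge C(-V(\tilde\rho))(x;X)$, but the right-hand side is governed by the Riccati equation for the distance to a \emph{point}, whose initial datum is $\tilde\mu(0^+)=+\infty$, not the shape-operator datum $\mu(0)=-c$ that produces $\theta'/\theta$. Since $\tilde\mu>\mu$ along the whole geodesic (by Riccati comparison with identical potential $F$ and ordered initial data), the resulting bound $-\theta(l)\tilde\mu(l)$ lies strictly \emph{below} the target $-\theta'(l)=-\theta(l)\mu(l)$. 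You can see the failure already in the flat model ball of radius $1/c$: there $\theta=1-ct$, $\nabla^2(-V(\tilde\rho))(X^\perp,X^\perp)=-(1-cl)/l=c-1/l<c=-\theta'(l)$, so the $\tilde\rho$-barrier never reaches the claimed lower bound. The correct Calabi-type support function here is the distance to a small translated hypersurface $\Sigma_\epsilon\subset\{\rho=\epsilon\}$ (plus $\epsilon$), not the distance to the foot point. Second, the fallback you propose---invoking the Greene--Wu mollification $V_\epsilon$---is circular: Propositions 3.4--3.8, which construct $V_\epsilon$ and establish $\nabla^2(-V_\epsilon)\ge(c+\widehat FV-\epsilon)g$, are all derived \emph{from} Proposition 3.2. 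You cannot use the approximation to prove the statement it depends on.

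The paper sidesteps both issues by observing that Kasue's Theorem 2.31 already bounds the generalized second difference quotient $C(\rho)(x;X)$ at \emph{all} points, cut locus included, given the radial curvature bound along a minimizing normal geodesic; the only extra work at $x\in\mathrm{Cut}(\Sigma)$ is to check that the hypothesis $K(\gamma(t))\ge F(t)$, assumed only off the cut locus, extends to $t=l$ by continuity of the curvature tensor, since $\gamma(t)\in\Omega\setminus\mathrm{Cut}(\Sigma)$ for all $t<l$. That is the missing idea in your proposal.
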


\begin{proof}
$\forall ~x \in \Omega \setminus \mbox{Cut}(\Sigma)$, let $\gamma:[0,l] \rightarrow \Omega$ be the unique minimizing geodesic with arc length parameter such that $t = \rho(\gamma(t))$ for $t \in [0,l]$, where $\gamma(0)\in \Sigma$ and $\gamma(l)=x$.
    
Since $V^{\prime}(\rho)\geq1-c\rho \geq0$, we know that $-V$ is nonincreasing as a function of $\rho$.
Then by the curvature assumptions and the Hessian
comparison theorem for $\rho$ (see Theorem 2.31 in \cite{kasue1982laplacian}), we conclude that for any $X \in T_{x}\Omega$ with $|X|=1$,
\begin{equation*}
\begin{aligned}
         C(-V(\rho))(x;X)&\geq-V^{\prime \prime} (l)\langle \gamma^{\prime}(l) ,X \rangle^2-V^{\prime}(l) \frac{\theta^{\prime}(l)}{\theta(l)} (1-\langle \gamma^{\prime}(l) ,X \rangle^2)\\
         &=-\theta^{\prime}(l)\\
         &=c+\int_{0}^{l} F(s)\theta(s) ds\\
         &\geq c+F(l)V(l)\\
         &=c+(\widehat{F}V)(x).
\end{aligned}
\end{equation*}
Since $\rho$ is smooth on $\Omega \setminus \mbox{Cut}(\Sigma)$, we then know that  
$$\nabla^2 (-V)|_{x}(X,X)=C(-V(\rho))(x;X)\geq c+ (\widehat{F}V)(x).$$

$\forall~x \in \mbox{Cut}(\Sigma)$, let $\gamma:[0,l] \rightarrow \Omega$ be a minimizing geodesic with arc length parameter such that $t = \rho(\gamma(t))$ for $t \in [0,l]$, where $\gamma(0)\in \Sigma$ and $\gamma(l)=x$. We then know that $\forall~ t \in [0,l)$, the
infimum of the radial sectional curvature 
$$K(\gamma(t))\geq f(t) \geq
 F(t).$$
 Based on the continuity of curvature tensor, we know that $\forall~ Y \in T_{x}\Omega$ with $|Y|=1$ and $Y \perp \gamma^{\prime}(l)$, $$\mbox{Sec}_{\Omega}(\gamma^{\prime}(l)\wedge Y) \geq f(l) \geq
 F(l).$$
 The remaining part of the proof is the same as the proof for points in $\Omega \setminus \mbox{Cut}(\Sigma)$, so we omit it.
\end{proof}

We now consider the Greene-Wu type approximation of $V$. Let's first recall the definitions of $\xi$-convex function and $\eta$-convex function (see \cite{greene1979c}), where $\xi$ is a real number and $\eta$ is a continuous function.

\begin{defn}
Let $M$ be a Riemannian manifold, $\psi:M\rightarrow \mathbb{R}$ be a continuous function on $M$ and $\xi$ be a real number. We call $\psi$ a $\xi$-convex function at a point $p \in M$ if there exists a positive constant $\delta$ such that the function $\Psi(x)=\psi(x)-\frac{\xi+\delta}{2}d^2(p,x)$ is convex in a neighborhood of $p$. Let $\eta: M\rightarrow \mathbb{R}$ be a continuous function, then $\psi$ is called $\eta$-convex on $M$ if, for each $p \in M$, $\psi$ is $\eta(p)$-convex at $p$.
\label{d:3.3} 
\end{defn}

We choose three neighborhoods $O_1$, $O_2$ and $O_3$ of $\mbox{Cut}(\Sigma)$ such that
$$O_1 \subset \subset O_2 \subset \subset O_3 \subset \subset \Omega,$$
where “$A \subset \subset B$” for two sets $A$ and $B$ means “$\overline{A} \subset B$ and $\overline{A}$ is compact”. Then we have

\begin{proposition}
 $\forall \,\epsilon >0$, the function $-V$ is $(c+ \widehat{F}V-\epsilon)$-convex on $O_3$.
\label{prop:3.4}
\end{proposition}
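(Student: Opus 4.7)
The plan is to verify, for each $p \in O_3$, the Greene-Wu definition of $(c + \widehat{F}V - \epsilon)$-convexity at $p$. Set $\xi := c + \widehat{F}V(p) - \epsilon$. My task is to exhibit some $\delta > 0$ and a neighborhood $U$ of $p$ so that $\Psi(x) := -V(x) - \frac{\xi+\delta}{2}\, d^2(p,x)$ is convex on $U$, meaning that $\Psi \circ \gamma$ is convex along every geodesic $\gamma$ in $U$.

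The essential input is the generalized-Hessian estimate already established in Proposition \ref{prop:3.2}, namely
\begin{equation*}
C(-V)(x; X) \geq c + \widehat{F}V(x), \qquad \forall\, x \in \Omega,\ X \in T_x\Omega \text{ with } |X|=1.
\end{equation*}
The crucial feature is that this bound holds globally on $\Omega$, including on $\mbox{Cut}(\Sigma)$ where $V$ fails to be smooth, which is precisely why the liminf operator $C$ must appear in place of $\nabla^2$. Now choose a geodesically convex normal neighborhood $U$ of $p$. On $U$ the function $d^2(p,\cdot)$ is smooth with $\nabla^2\bigl(\tfrac{1}{2} d^2(p,\cdot)\bigr)(p) = g_p$; after shrinking $U$ one may arrange $\nabla^2\bigl(\tfrac{1}{2} d^2(p,\cdot)\bigr)(x; X, X) \leq 1+\delta'$ throughout $U$ for an arbitrarily small preassigned $\delta' > 0$ and all unit $X$. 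At the same time, by continuity of $\widehat{F}V$, we can further shrink $U$ so that $c + \widehat{F}V(x) \geq \xi + \tfrac{\epsilon}{2}$ there.

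Since $C$ is additive under subtraction of a $C^2$ function (whose ordinary Hessian contributes exactly), combining these bounds gives
\begin{equation*}
C(\Psi)(x; X) \geq \left(\xi + \tfrac{\epsilon}{2}\right) - (\xi+\delta)(1+\delta') \qquad \text{on } U.
\end{equation*}
Taking $\delta = \tfrac{\epsilon}{4}$ first and then $\delta'$ small enough, depending only on $\epsilon$ and an upper bound for $|\xi|$ on the compact set $\overline{O_3}$, the right-hand side becomes strictly positive, so $C(\Psi) \geq 0$ throughout $U$. To finish, I invoke the classical real-variable fact that a continuous function on an interval whose lower second difference is pointwise nonnegative is convex; applying this along every geodesic in $U$ to the continuous function $\Psi \circ \gamma$ yields the required convexity of $\Psi$ on $U$ and verifies the Greene-Wu condition.

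The main subtlety is reconciling the one-sided (liminf) nature of the bound on $C(-V)$ with the strict positive slack $\delta > 0$ demanded by the Greene-Wu definition; this is handled by spending $\tfrac{\epsilon}{2}$ of the margin on the continuity loss of $\widehat{F}V$ near $p$ and another $\tfrac{\epsilon}{4}$ on $\delta$, leaving just enough room for the Hessian of $\tfrac{1}{2} d^2(p,\cdot)$ to slightly exceed $g$ on $U$.
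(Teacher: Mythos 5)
Your argument follows essentially the same route as the paper's: fix $p$, use the generalized Hessian lower bound from Proposition~\ref{prop:3.2}, control the Hessian of $d^2(p,\cdot)$ near $p$, use continuity of $\widehat{F}V$ to absorb part of the $\epsilon$-slack, and conclude via second differences that the Greene--Wu test function $\Psi$ is convex on a small neighborhood. One imprecision worth flagging: your one-line bound
$C(\Psi)(x;X) \geq (\xi + \tfrac{\epsilon}{2}) - (\xi+\delta)(1+\delta')$
tacitly assumes $\xi+\delta \geq 0$; when $\xi+\delta < 0$, multiplying by the negative factor flips which side of the pinching $(1\pm\delta')g$ gives the lower bound, and you should instead have $C(\Psi)(x;X) \geq (\xi + \tfrac{\epsilon}{2}) - (\xi+\delta)(1-\delta')$, or uniformly $C(\Psi)(x;X) \geq \tfrac{\epsilon}{2} - \delta - |\xi+\delta|\,\delta'$. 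Your reference to an upper bound for $|\xi|$ on $\overline{O_3}$ shows you had the right quantity in mind, and the conclusion survives either way, but the single stated inequality is false in the negative case as written; the paper handles this explicitly by treating the sign of $c + (\widehat{F}V)(p) - \epsilon + \delta$ in two cases.
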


\begin{proof}
    Fix an $\epsilon>0$ and a point $p \in O_3$, we only need to show that the function
    $-V$ is $(c+ (\widehat{F}V)(p)-\epsilon)$-convex at $p$.
    
    Let $\delta=\frac{\epsilon}{10}$ and $C= \max_{\Omega} (c+\widehat{F}V-\epsilon)$. Since $\widehat{F}V$ is continuous on $\Omega$, we then can choose a neighborhood $U_1$ of $p$ such that $\forall \, q \in U_1$, 
    $$|(\widehat{F}V)(p)-(\widehat{F}V)(q)|<\frac{\epsilon}{10}.$$
    In addition, since
    $$\nabla^2 (d^2(p,\cdot))|_{p}=2g_{p},$$
    we can choose a neighborhood $U_2$ of $p$ such that $\forall \, q \in U_2$,
    \begin{equation}
        2(1-\widehat{\epsilon})g_{q}\leq\nabla^2 (d^2(p,\cdot))|_{q}\leq 2(1+\widehat{\epsilon})g_{q},
        \label{ineq:3.4}
    \end{equation}
    where $\widehat{\epsilon}\in (0, \min\{1,\frac{\epsilon}{20(C+\delta)}\})$. Let $U=U_1 \cap U_2$, we now show that the function 
    $$\Psi(q)=-V(q)-\frac{c+(\widehat{F}V)(p)-\epsilon+\delta}{2}d^2(p,q)$$
    is convex on $U$. In fact, we only need to show that for any $q \in U$ and any unit $X \in T_{q}\Omega$, 
    $$\Psi(\exp_{q}(rX))+\Psi(\exp_{q}(-rX))-2\Psi(q)\geq0$$
    for small $r>0$.
    By Proposition \ref{prop:3.2}, we know that when $r$ is small enough,
    $$-V(\exp_{q}(rX))-V(\exp_{q}(-rX))+2V(q)\geq \bigg(c+(\widehat{F}V)(q)-\frac{\epsilon}{2}\bigg)r^2.$$
     We then have
    \begin{equation*}
    \begin{aligned}
         &\Psi(\exp_{q}(rX))+\Psi(\exp_{q}(-rX))-2\Psi(q)\\
         \geq&\bigg(c+(\widehat{F}V)(q)-\frac{\epsilon}{2}\bigg)r^2-\frac{c+(\widehat{F}V)(p)-\epsilon+\delta}{2}A(r),
    \end{aligned}
    \end{equation*}
where $A(r)=d^2(p,\exp_{q}(rX))+d^2(p,\exp_{q}(-rX))-2d^2(p,q)$.
Then by (\ref{ineq:3.4}), we know that 
$$2(1-2\widehat{\epsilon})r^2\leq A(r)\leq2(1+2\widehat{\epsilon})r^2$$
for small $r$. Now if $c+ (\widehat{F}V)(p)-\epsilon +\delta\geq 0$, we then have
\begin{equation*}
    \begin{aligned}
         &\Psi(\exp_{q}(rX))+\Psi(\exp_{q}(-rX))-2\Psi(q)\\
         \geq&\bigg(c+(\widehat{F}V)(q)-\frac{\epsilon}{2}\bigg)r^2-(c+(\widehat{F}V)(p)-\epsilon+\delta)(1+2\widehat{\epsilon})r^2\\
         =&\bigg(\frac{2\epsilon}{5}+(\widehat{F}V)(q)-(\widehat{F}V)(p)-2\widehat{\epsilon}(c+(\widehat{F}V)(p)-\epsilon+\delta)\bigg)r^2\\
         \geq&\frac{\epsilon}{5}r^2\\
         \geq&0.
    \end{aligned}
    \end{equation*}
Similarly, we can also show that $\Psi(\exp_{q}(rX))+\Psi(\exp_{q}(-rX))-2\Psi(q) \geq 0$ when $c+ (\widehat{F}V)(p)-\epsilon +\delta< 0$. So we finish the proof.
\end{proof}

We now consider the Riemannian convolution which is introduced by Greene-Wu (see \cite{greene1973subharmonicity,greene1976c,greene1979c}). In fact, we have 
$$\widehat{V}_{\tau}(x)=\frac{1}{\tau^{n+1}}\int_{v\in T_{x}\Omega} V(\exp_{x}v)k\bigg(\frac{|v|}{\tau}\bigg) d\mu_{x},$$
where $\mu_{x}$ is the Lebesgue measure on $T_x \Omega$ determined by the Riemannian metric $g$ at $x$ and $k: \mathbb{R}\rightarrow \mathbb{R}$ is a smooth nonnegative function which has its support contained in $[-1, 1]$, is a positive constant in a neighborhood of $0$ and satisfies
$$\int_{\mathbb{R}^{n+1}}k(|x|)dx=1.$$
To ensure that the function $\widehat{V}_{\tau}$ is well-defined on $O_3$, we always assume that $\tau < d(O_3,\Sigma)$. We then know that $\widehat{V}_{\tau}$ is a smooth function on $O_3$.
 
We now need the following approximation result (see \cite{greene1973subharmonicity,greene1976c,greene1979c}) for $\eta$-convex functions by its Riemannian convolution.

\begin{proposition}
Let $f$ be a $\eta$-convex function on a Riemannian manifold $M$ and $K$ is a compact subset of $M$, where $\eta:M\rightarrow \mathbb{R}$ is a continuous function.
Then there exist a neighborhood of $K$ and a $\tau_0 >0$ such that for all $\tau \in (0,\tau_0)$, the Riemannian convolution $\widehat{f}_{\tau}$ of $f$ is $\eta$-convex on the neighborhood.
\label{prop:3.5}
\end{proposition}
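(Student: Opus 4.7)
The plan is to prove the result by establishing $\eta$-convexity of $\widehat{f}_\tau$ at each individual point of $K$, then extracting a uniform $\tau_0$ via compactness. Fix $p \in K$. The target is to produce a $\tau_0(p) > 0$, a neighborhood $U_p$ of $p$, and a slack $\delta_p > 0$ so that for every $\tau \in (0, \tau_0(p))$ the function
$$\Psi_{p,\tau}(x) := \widehat{f}_\tau(x) - \tfrac{\eta(p)+\delta_p}{2}\, d(p,x)^2$$
is convex on $U_p$; by Definition~\ref{d:3.3} this is exactly $\eta(p)$-convexity of $\widehat{f}_\tau$ at $p$.

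The first step is to extract a uniform convexity estimate for $f$ itself from the $\eta$-convexity hypothesis. By definition, for each $q$ in a small neighborhood $W$ of $p$ there is $\delta(q) > 0$ such that $f - \tfrac{\eta(q)+\delta(q)}{2} d(q,\cdot)^2$ is convex near $q$. Using the continuity of $\eta$ together with a Lebesgue-number argument on a compact neighborhood of $p$, I would extract a uniform $\delta > 0$ and a smaller open $W' \ni p$ on which the second difference of $f - \tfrac{\eta(p)+2\delta}{2}d(p,\cdot)^2$ along any sufficiently short geodesic segment is nonnegative; the extra slack $\delta$ absorbs both the variation of $\eta$ and the discrepancy between $d(q,\cdot)^2$ and $d(p,\cdot)^2$.

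The second step is to analyze $\nabla^2 \widehat{f}_\tau$ by a Greene-Wu style comparison with a Euclidean convolution. Working in normal coordinates centered at $p$, the integrand $f(\exp_x v)\, k(|v|/\tau)\, \tau^{-(n+1)}$, after an $x$-dependent diffeomorphism identifying the tangent space $T_x\Omega$ with a fixed Euclidean ball, can be written as the Euclidean convolution of $f$ against $k_\tau$ plus an error term controlled by $\tau^2$ times a bound on the Riemannian curvature on a compact neighborhood of $K$, as in \cite{greene1973subharmonicity, greene1976c}. Differentiating twice under the integral sign (legitimate since $k$ is smooth with compact support) presents $\nabla^2\widehat{f}_\tau$ as a weighted average of second differences of $f$, plus curvature-induced error terms of size $O(\tau)$. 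Since convolution is a positive operator and the second differences of $f - \tfrac{\eta(p)+2\delta}{2} d(p,\cdot)^2$ are nonnegative on geodesic segments inside $W'$, the leading term in the Hessian of $\Psi_{p,\tau}$ is nonnegative, while the error terms can be made strictly smaller than $\delta$ by choosing $\tau_0(p)$ small enough. This yields the desired convexity of $\Psi_{p,\tau}$ on a sub-neighborhood $U_p$ of $p$.

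Finally, cover $K$ by finitely many such neighborhoods $U_{p_1},\dots,U_{p_N}$ and set $\tau_0 = \min_i \tau_0(p_i)$; the union $\bigcup_i U_{p_i}$ is then the required neighborhood of $K$. The main obstacle lies in the second step: making the comparison between the Riemannian convolution and the Euclidean one rigorous enough that the curvature error is genuinely $O(\tau)$ uniformly in a neighborhood of $K$, while simultaneously keeping track of how the varying center $q$ in the pointwise $\eta$-convexity inequality interacts with the fixed center $p$ used to define $\Psi_{p,\tau}$. The bookkeeping must balance three small parameters---the slack $\delta$, the scale $\tau$, and the size of $U_p$---so that both the curvature error and the continuity error in $\eta$ are comfortably absorbed into $\delta$; this is precisely where the continuity of $\eta$ (rather than merely its boundedness) enters in an essential way.
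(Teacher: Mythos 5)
The paper does not supply a proof of this proposition at all; it is stated as a black-box citation of Greene--Wu (\cite{greene1973subharmonicity,greene1976c,greene1979c}), so there is no paper-internal argument to compare your attempt against. Your sketch reconstructs the right skeleton of the Greene--Wu proof --- test $\eta(p)$-convexity of $\widehat f_\tau$ by showing $\Psi_{p,\tau}=\widehat f_\tau - \tfrac{\eta(p)+\delta_p}{2}d(p,\cdot)^2$ is convex near $p$, express $\nabla^2\widehat f_\tau$ as a positive average of second differences of $f$ with curvature-controlled error, then extract a uniform $\tau_0$ by compactness --- and you correctly identify that the hard content lives in the convolution comparison.

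Two points, however, are genuinely incomplete rather than merely terse. First, the ``Lebesgue-number'' extraction of a uniform slack $\delta>0$ in your Step~2 is not automatic: the per-point $\delta(q)$ produced by the definition of $\eta$-convexity has no a priori positive lower bound as $q$ varies. The repair is to use that convexity of $f-\tfrac{\eta(q)+\delta(q)}{2}d(q,\cdot)^2$ holds on a whole neighborhood $W_q$ (not just at $q$), cover a compact neighborhood of $K$ by finitely many $W_{q_i}$, set $\delta_0=\min_i\delta(q_i)>0$, and then refine the cover so that $|\eta(x)-\eta(q_i)|<\delta_0/2$ on each $W_{q_i}$; that last refinement is the \emph{only} place the continuity of $\eta$ actually enters, and it should be made explicit rather than gestured at. Second, the claim in Step~3 that $\nabla^2\widehat f_\tau$ is a positive average of second differences of $f$ plus a curvature error that is uniformly $O(\tau)$ is exactly the Greene--Wu technical lemma (differentiating the convolution under the integral while controlling how $\exp_x$ and the identification of nearby tangent spaces distort second differences); your proposal defers this wholesale to the references. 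Given that the proposition is itself a citation in the paper, that deferral is a defensible editorial choice, but it means the proposal is an annotated outline of Greene--Wu's argument rather than a self-contained proof, and the ``error is $O(\tau)$'' assertion would need to be substantiated before the compactness step can legitimately close.
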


By Propositions \ref{prop:3.4} and \ref{prop:3.5}, we have

\begin{proposition}
 $\forall \,\epsilon >0$, there exists $\tau_0 >0$ such that for all $\tau \in (0,\tau_0)$, the function $-\widehat{V}_{\tau}$ is $(c+ \widehat{F}V-\epsilon)$-convex on $O_2$.
\label{prop:3.6}
\end{proposition}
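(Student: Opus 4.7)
The plan is to derive Proposition \ref{prop:3.6} as a direct two-step combination of the two preceding results. Fix $\epsilon > 0$. Proposition \ref{prop:3.4} gives, with no further work, that the function $-V$ is $(c + \widehat{F}V - \epsilon)$-convex on $O_3$. The coefficient function $\eta := c + \widehat{F}V - \epsilon$ is continuous on $O_3$, because $\widehat{F} = F \circ \rho$ and $V = V(\rho)$ are continuous, so ``$\eta$-convexity'' genuinely falls under Definition \ref{d:3.3} with a continuous coefficient. By the nesting hypothesis $O_2 \subset \subset O_3$, the closure $\overline{O_2}$ is a compact subset of the open submanifold $O_3$ of $\Omega$, putting us squarely in the setting in which Proposition \ref{prop:3.5} is formulated.

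Next I will apply Proposition \ref{prop:3.5} with $M = O_3$, $f = -V$, $\eta$ as above and $K = \overline{O_2}$. This produces a neighborhood $U \subset O_3$ of $\overline{O_2}$ and a $\tau_0 > 0$ such that for every $\tau \in (0, \tau_0)$ the Greene-Wu Riemannian convolution $\widehat{(-V)}_\tau$ is $\eta$-convex on $U$. Because the convolution is defined by a linear integral expression in its argument function, $\widehat{(-V)}_\tau = -\widehat{V}_\tau$, and the $\eta$-convexity transfers to $-\widehat{V}_\tau$. After shrinking $\tau_0$, if necessary, so that $\tau_0 < d(O_3, \Sigma)$, the function $\widehat{V}_\tau$ is itself well-defined on $O_3$, in line with the convention fixed just before the proposition. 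Restricting the conclusion from $U$ to the subset $O_2 \subset U$ yields exactly the claim of Proposition \ref{prop:3.6}.

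The argument is essentially mechanical once Propositions \ref{prop:3.4} and \ref{prop:3.5} are on hand, and I do not foresee any substantive obstacle. The real work of the section is front-loaded into those earlier statements --- the Hessian comparison computation that produced the $\eta$-convexity of $-V$, and the Greene-Wu smoothing theorem recalled as Proposition \ref{prop:3.5}. The only small verifications required in the present step are (i) the continuity of $\eta$ so that the pointwise Definition \ref{d:3.3} globalizes to all of $O_3$, (ii) the correct nesting $\overline{O_2} \subset U \subset O_3$ so that the final restriction is legitimate, and (iii) choosing $\tau_0$ small enough simultaneously for the Greene-Wu statement and for the well-definedness of $\widehat{V}_\tau$ on $O_3$.
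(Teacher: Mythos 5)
Your proposal is correct and is exactly the route the paper takes: Proposition~\ref{prop:3.6} is stated in the paper as an immediate consequence of Propositions~\ref{prop:3.4} and~\ref{prop:3.5}, and you have simply made the application explicit by taking $M = O_3$, $f = -V$, $\eta = c + \widehat{F}V - \epsilon$, and $K = \overline{O_2}$. The small verifications you flag (continuity of $\eta$, the nesting $\overline{O_2} \subset\subset O_3$, linearity so that $\widehat{(-V)}_\tau = -\widehat{V}_\tau$, and shrinking $\tau_0$ below $d(O_3,\Sigma)$) are the right ones, and the argument goes through without difficulty.
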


Propositions \ref{prop:3.6} shows that $\forall \, \tau \in (0,\tau_0)$,
$$\nabla ^2 (-\widehat{V}_{\tau})|_{x}(X,X) \geq c+ (\widehat{F}V)(x)-\epsilon$$
for any $x \in O_2$ and any $X \in T_x\Omega$ with $|X|=1$. 

At the end of this section, by a gluing procedure, we finish the construction of the approximation of $V$.

Let $\phi$ be a smooth nonnegative cut-off function such that supp $\phi\subset O_2$ and $\phi \equiv1$ on $O_1$. The suitable approximation $V_{\tau}$ is defined by
\begin{equation}
    V_{\tau}=\phi \widehat{V}_{\tau}+(1-\phi)V.
    \label{e:10}
\end{equation}
It is easy to show that $V_{\tau} \in C^3(\Omega)$, $V_{\tau} \geq 0$ on $\Omega$ and $V=V_{\tau}$ on $\Omega \setminus O_2$. In addition, we have
\begin{equation}
    V_{\tau}-V=\phi(\widehat{V}_{\tau}-V).
    \label{e:11}
\end{equation}
So according to the properties of Riemannian convolution (see \cite{greene1973subharmonicity,greene1976c,greene1979c}) and (\ref{e:11}), we have
$$\lim_{\tau\rightarrow 0}||V_{\tau}-V||_{C^{0}(\Omega)}=0.$$
We now consider the Hessian of $-V_{\tau}$. In fact, we have the following

\begin{proposition}
 $\forall \,\epsilon >0$, there exists a $\tau(\epsilon) >0$ small enough such that 
 $$\nabla ^2 (-V_{\tau(\epsilon)}) \geq (c+ \widehat{F}V-\epsilon)g.$$
\label{prop:3.7}
\end{proposition}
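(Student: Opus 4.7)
The plan is to verify the Hessian bound on three regions determined by the cutoff $\phi$ and patch them together. On the exterior $\Omega \setminus O_2$, where $\phi \equiv 0$, one has $V_\tau = V$; since $\mathrm{Cut}(\Sigma) \subset O_1 \subset O_2$, this region lies in the smooth locus of $V$, and Proposition \ref{prop:3.2} gives $\nabla^2(-V) \geq (c+\widehat{F}V)g \geq (c+\widehat{F}V-\epsilon)g$. On the core $O_1$, where $\phi \equiv 1$, one has $V_\tau = \widehat{V}_\tau$, and Proposition \ref{prop:3.6} provides a threshold $\tau_1 > 0$ such that $\nabla^2(-\widehat{V}_\tau) \geq (c+\widehat{F}V-\epsilon)g$ on $O_2$, hence on $O_1$, for every $\tau \in (0,\tau_1)$.

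The main work lies on the compact transition annulus $K := \overline{O_2} \setminus O_1$. Using (\ref{e:11}) in the form $V_\tau = V + \phi(\widehat{V}_\tau - V)$, direct differentiation gives
\begin{equation*}
\nabla^2 V_\tau = \nabla^2 V + \phi \nabla^2(\widehat{V}_\tau - V) + \nabla\phi \otimes \nabla(\widehat{V}_\tau - V) + \nabla(\widehat{V}_\tau - V) \otimes \nabla\phi + (\widehat{V}_\tau - V)\nabla^2\phi.
\end{equation*}
Because $\mathrm{Cut}(\Sigma) \subset O_1$ is disjoint from $K$, the function $V$ is smooth on an open neighborhood of $K$, so Proposition \ref{prop:3.2} still applies pointwise and gives $\nabla^2(-V) \geq (c+\widehat{F}V)g$ on $K$. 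The standard $C^k$-convergence property of the Greene-Wu Riemannian convolution (see \cite{greene1973subharmonicity,greene1976c,greene1979c}) then guarantees that $\|\widehat{V}_\tau - V\|_{C^2(K)} \to 0$ as $\tau \to 0$. Since $\phi$ is a fixed smooth cutoff with $\|\phi\|_{C^2(\Omega)}$ bounded, one can choose $\tau_2 \in (0,\tau_1)$ so small that the operator norm of the sum of the last four terms of the display above is bounded by $\epsilon$ on $K$ for every $\tau \in (0,\tau_2)$. Combined with the lower bound on $\nabla^2(-V)$, this yields $\nabla^2(-V_\tau) \geq (c+\widehat{F}V-\epsilon)g$ on $K$.

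Setting $\tau(\epsilon) := \tau_2$ glues the three regional estimates into the claimed global inequality. The main obstacle is precisely the transition region: $C^0$-convergence of the Greene-Wu mollification would not suffice, as controlling the cross terms $\nabla\phi \otimes \nabla(\widehat{V}_\tau - V)$ and the bulk term $\phi \nabla^2(\widehat{V}_\tau - V)$ demands convergence of first and second derivatives. This convergence is available only because $\mathrm{Cut}(\Sigma)$ has been sequestered inside $O_1$, so that $K$ lies entirely in the smooth locus of $V$ where the Riemannian convolution does converge in $C^2$.
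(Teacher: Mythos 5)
Your proof is correct and follows essentially the same three-region strategy as the paper: exact agreement with $V$ outside $O_2$, the convolution estimate of Proposition \ref{prop:3.6} on $O_1$, and $C^2$-convergence of $V_\tau$ to $V$ on the compact annulus $\overline{O}_2\setminus O_1$ where $V$ is smooth. Your explicit product-rule expansion of $\nabla^2 V_\tau$ merely spells out the mechanism behind the paper's one-line appeal to $\lVert V_\tau - V\rVert_{C^2(\overline{O}_2\setminus O_1)} \to 0$, so there is no substantive difference.
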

\begin{proof}
    Fix an $\epsilon>0$.
    On $\Omega\setminus O_2$, we know that $V_{\tau}=V$ and by Proposition \ref{prop:3.2}, we have
    $$\nabla ^2 (-V_{\tau}) \geq (c+ \widehat{F}V)g.$$
    On $O_1$, we know that $V_{\tau}=\widehat{V}_{\tau}$ and by Proposition \ref{prop:3.6}, we know that 
    $$\nabla ^2 (-V_{\tau}) \geq (c+ \widehat{F}V-\epsilon)g$$
    when $\tau$ is small enough. On $\overline{O}_2\setminus O_1$, by the properties of Riemannian convolution (see \cite{greene1973subharmonicity,greene1976c,greene1979c}) and (\ref{e:11}), we know that
    $$\lim_{\tau\rightarrow 0}||V_{\tau}-V||_{C^{2}(\overline{O}_2\setminus O_1)}=0.$$
    So for the $\epsilon>0$, we know that there exists a $\tau(\epsilon)>0$ such that
    $$\nabla ^2 (-V_{\tau(\epsilon)})|_{x}(X,X) \geq \nabla ^2 (-V)|_{x}(X,X)-\epsilon \geq c+ \widehat{F}V(x)-\epsilon$$
    for any $x\in \overline{O}_2\setminus O_1$ and any $X \in T_x \Omega$ with $|X|=1$. We then finish the proof.
    
\end{proof}

All in all, we can provide the following proposition 
\begin{proposition}
 Let $\Omega$ and $V$ be the manifold and function discussed above. Fix a neighborhood $O$ of $\mbox{Cut}(\Sigma)$ in $\Omega$. Then $\forall \,\epsilon >0$, there exists a nonnegative function $V_{\epsilon} \in C^{3}(\Omega)$ such that 
 $V_{\epsilon}=V$ on $\Omega \setminus O$ and 
 $$\nabla ^2 (-V_{\epsilon}) \geq (c+ \widehat{F}V-\epsilon)g.$$
 In particular, we also have
$$ \lim_{\epsilon\rightarrow 0}||V_{\epsilon}-V||_{C^{0}(\Omega)}=0.$$
\label{prop:3.8}
\end{proposition}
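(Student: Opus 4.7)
My strategy is to repeat the construction leading up to Proposition \ref{prop:3.7} with the given open neighborhood $O$ of $\mbox{Cut}(\Sigma)$ in place of the ambient neighborhoods fixed at the start of the section. Concretely, I first choose smaller nested neighborhoods $O_1 \subset \subset O_2 \subset \subset O_3 \subset \subset O$ of $\mbox{Cut}(\Sigma)$, pick a smooth cutoff $\phi$ supported in this new $O_2$ with $\phi\equiv 1$ on $O_1$, and then set $V_\epsilon := V_{\tau(\epsilon)}$ as in (\ref{e:10}), where $\tau(\epsilon) > 0$ is the parameter produced by Proposition \ref{prop:3.7} (and, shrinking it further if necessary, chosen so that $\tau(\epsilon)\to 0$ as $\epsilon\to 0$).

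With this setup the three required conclusions follow essentially for free. The gluing identity $V_\epsilon \equiv V$ on $\Omega\setminus O$ holds by construction, since $V_{\tau(\epsilon)}\equiv V$ off $O_2 \subset O$. For nonnegativity, Proposition \ref{prop:3.1} gives $\theta(s) \geq 1-cs$ on $[0,\rho_{\max}]$, which together with $\rho_{\max}\leq 1/c$ from (\ref{ineq:6}) yields $V\geq 0$ on $\Omega$; the Greene--Wu convolution of a nonnegative function against a nonnegative kernel is itself nonnegative, so $\widehat{V}_\tau \geq 0$, and hence $V_\epsilon = \phi\widehat{V}_\tau + (1-\phi)V \geq 0$. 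The Hessian estimate $\nabla^2(-V_\epsilon) \geq (c+\widehat{F}V-\epsilon)g$ is literally the conclusion of Proposition \ref{prop:3.7}, whose proof partitions $\Omega$ into the three regions $\Omega\setminus O_2$, $O_1$, and $\overline{O}_2\setminus O_1$, and handles them in turn via Proposition \ref{prop:3.2}, Proposition \ref{prop:3.6}, and the $C^2$-closeness of $\widehat{V}_\tau$ to $V$ on the transition shell.

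For the $C^0$ limit, equation (\ref{e:11}) reduces the task to showing $\|\widehat{V}_\tau - V\|_{C^0(\Omega)}\to 0$ as $\tau\to 0$, which is a classical property of the Riemannian convolution already invoked in the discussion preceding Proposition \ref{prop:3.7}. Since we arranged $\tau(\epsilon)\to 0$, the desired convergence $\|V_\epsilon - V\|_{C^0(\Omega)}\to 0$ follows.

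The only nontrivial issue is organizational: every analytic ingredient has been built up in the preceding propositions. The delicate step, already handled in Proposition \ref{prop:3.7}, is the transition shell $\overline{O}_2\setminus O_1$, where neither the estimate for $V$ (which lives only on $\Omega\setminus\mbox{Cut}(\Sigma)$) nor the $\eta$-convexity statement for $\widehat{V}_\tau$ (which lives on $O_2$) is directly applicable. The three-layer nesting $O_1\subset\subset O_2\subset\subset O_3$ is chosen precisely so that this shell is bounded away from $\mbox{Cut}(\Sigma)$; then $V$ is $C^3$ on a neighborhood of the shell and the uniform $C^2$-convergence $\widehat{V}_\tau\to V$ there absorbs the cutoff-derivative terms into the $\epsilon$-slack. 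That is really the entire content of the proposition.
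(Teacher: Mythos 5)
Your proposal is correct and matches the paper's (largely implicit) argument: re-run the nested-neighborhood and cutoff construction with $O_1\subset\subset O_2\subset\subset O_3\subset\subset O$, invoke Proposition \ref{prop:3.7} for the Hessian bound, and read off the localization, nonnegativity, and $C^0$-convergence from (\ref{e:10}), (\ref{e:11}), and the standard properties of the Greene--Wu convolution. This is exactly how the paper intends Proposition \ref{prop:3.8} to be read as a summary of the section, so there is nothing further to add.
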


\section{Proofs of Theorems \ref{thm: 1.1} and \ref{thm: 1.3}}

In this section, we concentrate on the proofs of Theorem \ref{thm: 1.1} and \ref{thm: 1.3}. We first prove Theorem \ref{thm: 1.1}.

Let $(\Omega^{n+1},g)$ be as in Theorem \ref{thm: 1.1}. By choosing $f\equiv K$, we know that $\theta$ satisfies
\begin{equation*}
\begin{cases}
\begin{aligned}
    	&\theta^{\prime \prime}+\overline{K}\theta=0,\\
    	&\theta(0)=1,\\
        &\theta^{\prime}(0)=-c,
    \end{aligned}
\end{cases}
\end{equation*}
where $\overline{K}=\min\{0,K\}$.
We then conclude that
$$
    \theta=
    \left\{
    \begin{aligned}
    	&1-ct,~~&K\geq0,\\
    	&\cosh(\sqrt{|K|}t)-\frac{c}{\sqrt{|K|}}\sinh(\sqrt{|K|}t),~~& K < 0,
    \end{aligned}
    \right.
    $$
and 
$$
    V=
    \left\{
    \begin{aligned}
    	&\rho-\frac{c}{2}\rho^2,~~&K\geq0,\\
    	&\frac{1}{\sqrt{|K|}}\sinh(\sqrt{|K|}\rho)-\frac{c}{|K|}\cosh(\sqrt{|K|}\rho)+\frac{c}{|K|},~~& K < 0. 
    \end{aligned}
    \right.
    $$
In addition, by Proposition \ref{prop:3.8} and the fact that $V \in C^{\infty}(\Omega\setminus \mbox{Cut}(\Sigma))$, we have
\begin{proposition}
 Let $(\Omega^{n+1},g)$ be as in Theorem \ref{thm: 1.1} and $V$ be the weight function discussed above. Fix a neighborhood $O$ of $\mbox{Cut}(\Sigma)$ in $\Omega$. Then $\forall \,\epsilon >0$, there exists a nonnegative function $V_{\epsilon} \in C^{\infty}(\Omega)$ such that 
 $V_{\epsilon}=V$ on $\Omega \setminus O$ and 
$$
    \nabla ^2 (-V_{\epsilon})\geq(c+\overline{K}V-\epsilon)g. 
    $$
 In particular, we also have
$$ \lim_{\epsilon\rightarrow 0}||V_{\epsilon}-V||_{C^{0}(\Omega)}=0.$$
\label{prop:4.1}
\end{proposition}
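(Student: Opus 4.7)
The plan is to reduce Proposition 4.1 to Proposition 3.8 by specializing to $f \equiv K$, and then to upgrade the regularity of the approximating function from $C^3$ to $C^\infty$ by exploiting the fact that, in this special case, the radial profile $\theta$ solves a \emph{constant-coefficient} linear ODE and is therefore analytic.

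First I would verify the explicit formulas displayed before the statement. With $f\equiv K$, the function $F(t)=\min\{0,\min_{x\in[0,t]}f(x)\}$ reduces to the constant $\overline{K}=\min\{0,K\}$, so $\widehat{F}=\overline{K}$. The ODE $\theta''+\overline{K}\theta=0$ with $\theta(0)=1$, $\theta'(0)=-c$ then integrates in closed form, yielding $\theta(t)=1-ct$ when $K\geq 0$ and $\theta(t)=\cosh(\sqrt{|K|}\,t)-\tfrac{c}{\sqrt{|K|}}\sinh(\sqrt{|K|}\,t)$ when $K<0$; integrating once in $t$ gives the stated formulas for $V(\rho)$. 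In particular $\theta$ (and hence $V$ as a function of $\rho$) is analytic on $[0,\rho_{\max}]$.

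Next I would apply Proposition 3.8 directly to obtain a nonnegative function $V_\epsilon$, defined on $\Omega$, agreeing with $V$ outside the chosen neighborhood $O$ of $\mathrm{Cut}(\Sigma)$, and satisfying $\nabla^2(-V_\epsilon)\geq (c+\widehat{F}V-\epsilon)g=(c+\overline{K}V-\epsilon)g$, together with $\|V_\epsilon-V\|_{C^0(\Omega)}\to 0$ as $\epsilon\to 0$. All that remains is to upgrade $V_\epsilon\in C^3(\Omega)$ to $V_\epsilon\in C^\infty(\Omega)$, which is where the constant-coefficient ODE is used.

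The regularity upgrade comes from reading off the Greene-Wu gluing construction $V_\tau=\phi\widehat{V}_\tau+(1-\phi)V$ used inside the proof of Proposition 3.8, with neighborhoods $O_1\subset\subset O_2\subset\subset O_3\subset\subset\Omega$ of $\mathrm{Cut}(\Sigma)$ chosen so that $\overline{O}_2\subset O$. On $O_1$ we have $V_\tau=\widehat{V}_\tau$, which is smooth because it is a Riemannian convolution of a continuous function against a smooth mollifier. On $\Omega\setminus\overline{O}_2$ we have $V_\tau=V$; here $\rho$ is smooth because this set lies outside $\mathrm{Cut}(\Sigma)$, and $\theta$ is analytic, so $V=\int_0^\rho\theta(s)\,ds$ is smooth. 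On the transition region $\overline{O}_2\setminus O_1$, both $\widehat{V}_\tau$ and $V$ are smooth and $\phi$ is smooth, so the convex combination is smooth. Hence $V_\tau\in C^\infty(\Omega)$, and choosing $\tau=\tau(\epsilon)$ as in the proof of Proposition 3.8 produces the required $V_\epsilon\in C^\infty(\Omega)$. The main (only) subtle point is this smoothness upgrade on the transition region; it is made possible precisely because the specialization $f\equiv K$ turns $\theta$ into an analytic function, so that $V$ inherits smoothness away from the cut locus and no loss of regularity is incurred in the gluing step.
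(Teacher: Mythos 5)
Your proposal is correct and is essentially the argument the paper itself relies on: the paper derives Proposition~\ref{prop:4.1} directly from Proposition~\ref{prop:3.8} together with the observation that, in the specialization $f\equiv K$, the profile $\theta$ solves a constant-coefficient ODE and hence $V\in C^{\infty}(\Omega\setminus\mathrm{Cut}(\Sigma))$, which is exactly the regularity upgrade you spell out through the Greene--Wu gluing $V_\tau=\phi\widehat V_\tau+(1-\phi)V$. Your verification of the explicit formulas for $\theta$ and $V$, and your choice of $O_2\subset\subset O$ to ensure $V_\epsilon=V$ outside $O$, are faithful to the construction in Section~3.
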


We then can provide a key inequality for the proof of Theorem \ref{thm: 1.1}.
\begin{proposition}
Let $(\Omega^{n+1},g)$ be as in Theorem \ref{thm: 1.1} and $u$ be a harmonic function on $\Omega$. Then we have
\begin{equation}
     \int_{\Sigma} \bigg(\frac{\partial u}{ \partial \nu}\bigg )^2da  \geq \int_{\Omega} (c+\overline{K}V)|\nabla u|^2 dA.
\label{f:4.1}
\end{equation}
\label{prop:4.2}
\end{proposition}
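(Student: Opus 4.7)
The plan is to combine the weighted Reilly-type formula (Proposition \ref{p:2.1}) and the Pohozaev-type identity (Proposition \ref{p:2.2}), both with $\phi\equiv 0$, applied to the harmonic function $u$ using the smooth approximation $V_{\epsilon}$ from Proposition \ref{prop:4.1} as the weight. Since $\mbox{Cut}(\Sigma)$ lies in the interior of $\Omega$, I may choose the neighborhood $O$ of $\mbox{Cut}(\Sigma)$ disjoint from $\Sigma$, so $V_{\epsilon}=V$ in a collar of the boundary; in particular $V_{\epsilon}|_{\Sigma}=0$ and $\frac{\partial V_{\epsilon}}{\partial \nu}|_{\Sigma}=-1$.

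Using $\Delta u=0$ together with these two boundary values, Proposition \ref{p:2.1} collapses to
\[
\int_{\Sigma}|\nabla_{\Sigma}u|^{2}\,da=\int_{\Omega}V_{\epsilon}|\nabla^{2}u|^{2}\,dA+\int_{\Omega}\bigl(\nabla^{2}V_{\epsilon}-\Delta V_{\epsilon}\,g+V_{\epsilon}\mbox{Ric}_{\Omega}\bigr)(\nabla u,\nabla u)\,dA.
\]
Taking $X=\nabla V_{\epsilon}$ in Proposition \ref{p:2.2} and using that $V_{\epsilon}$ is constant on $\Sigma$ (so $\nabla V_{\epsilon}|_{\Sigma}=-\nu$, giving $\langle X,\nabla u\rangle|_{\Sigma}=-\frac{\partial u}{\partial\nu}$ and $\langle X,\nu\rangle|_{\Sigma}=-1$), together with $\langle\nabla^{\Omega}_{\nabla u}\nabla V_{\epsilon},\nabla u\rangle=\nabla^{2}V_{\epsilon}(\nabla u,\nabla u)$, the Pohozaev identity (multiplied by $2$) becomes
\[
2\int_{\Omega}\nabla^{2}V_{\epsilon}(\nabla u,\nabla u)\,dA-\int_{\Omega}|\nabla u|^{2}\Delta V_{\epsilon}\,dA=\int_{\Sigma}|\nabla_{\Sigma}u|^{2}\,da-\int_{\Sigma}\left(\frac{\partial u}{\partial\nu}\right)^{2}\,da.
\]

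Eliminating $\int_{\Sigma}|\nabla_{\Sigma}u|^{2}\,da$ between the two identities cancels the $\Delta V_{\epsilon}$ contributions and produces
\[
\int_{\Sigma}\left(\frac{\partial u}{\partial\nu}\right)^{2}\,da=\int_{\Omega}V_{\epsilon}|\nabla^{2}u|^{2}\,dA+\int_{\Omega}\nabla^{2}(-V_{\epsilon})(\nabla u,\nabla u)\,dA+\int_{\Omega}V_{\epsilon}\mbox{Ric}_{\Omega}(\nabla u,\nabla u)\,dA.
\]
Since $V_{\epsilon}\geq 0$ and $\mbox{Ric}_{\Omega}\geq 0$, the first and third integrals are nonnegative, and Proposition \ref{prop:4.1} supplies the pointwise bound $\nabla^{2}(-V_{\epsilon})\geq(c+\overline{K}V-\epsilon)g$, so
\[
\int_{\Sigma}\left(\frac{\partial u}{\partial\nu}\right)^{2}\,da\geq\int_{\Omega}(c+\overline{K}V-\epsilon)|\nabla u|^{2}\,dA.
\]
Letting $\epsilon\to 0$ (using $V_{\epsilon}\to V$ in $C^{0}(\Omega)$) yields \eqref{f:4.1}.

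The main obstacle is regularity and boundary bookkeeping rather than any hidden inequality: $V$ itself is only $C^{3}$ off the cut locus, so both identities must be run for the smoother $V_{\epsilon}$ and then passed to the limit, which is precisely what Proposition \ref{prop:4.1} is designed for. One must also check that $\nabla V_{\epsilon}$ is regular enough (at least Lipschitz continuous) for Proposition \ref{p:2.2} to apply, and that choosing the neighborhood $O$ of $\mbox{Cut}(\Sigma)$ disjoint from $\Sigma$ preserves the boundary values $V_{\epsilon}|_{\Sigma}=0$ and $\frac{\partial V_{\epsilon}}{\partial\nu}|_{\Sigma}=-1$ that simultaneously kill the $V$-weighted boundary integrals in the Reilly formula and produce the clean coefficients in the Pohozaev identity.
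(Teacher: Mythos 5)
Your proposal matches the paper's proof of Proposition~\ref{prop:4.2} step by step: apply the weighted Reilly formula and the Pohozaev identity with $\phi\equiv 0$ and $X=\nabla V_{\epsilon}$ to the harmonic $u$, use $V_{\epsilon}|_{\Sigma}=0$ and $\partial V_{\epsilon}/\partial\nu|_{\Sigma}=-1$, eliminate the tangential boundary term to obtain identity \eqref{f:4.4}, invoke the Hessian lower bound from Proposition~\ref{prop:4.1} together with $V_{\epsilon}\geq 0$ and $\mbox{Ric}_{\Omega}\geq 0$, and let $\epsilon\to 0$. Your extra remarks about choosing $O$ disjoint from $\Sigma$ and the regularity of $\nabla V_{\epsilon}$ are correct bookkeeping points that the paper leaves implicit, so the argument is essentially identical.
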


\begin{proof}
By the construction of $V_{\epsilon}$, we know that
$$V_{\epsilon}|_{\Sigma}=0$$
and
$$\frac{\partial V_{\epsilon}}{\partial \nu}|_{\Sigma}=-1.$$
Then by the weighted Reilly-type formula in Proposition \ref{p:2.1} (let $\phi \equiv 0$), we have
\begin{equation}
\begin{aligned}
           -&\int_{\Omega} V_{\epsilon}|\nabla ^2 u|^2  dA    
           =-\int_{\Sigma} |\nabla_{\Sigma} u|^2 da \\
          & +\int_{\Omega} (\nabla^2 V_{\epsilon} - \Delta V_{\epsilon} g + V_{\epsilon}\mbox{Ric}_{\Omega})(\nabla u,\nabla u)dA.
\label{f:4.2}
\end{aligned}
\end{equation}
In addition, by the Pohozaev-type identity in Proposition \ref{p:2.2} (let $\phi \equiv 0$ and $X= \nabla V_{\epsilon}$), we have
\begin{equation}
\begin{aligned}
           \int_{\Omega} 
            (2\nabla^2 V_{\epsilon} -\Delta V_{\epsilon}g) (\nabla u,\nabla u) dA
           =\int_{\Sigma}\bigg ( |\nabla_{\Sigma} u|^2- \bigg(\frac{\partial u}{\partial \nu}\bigg)^2 \bigg)  da.
\end{aligned}
\label{f:4.3}
\end{equation}
Then by (\ref{f:4.2}) and (\ref{f:4.3}), we have
\begin{equation}
\begin{aligned}
           \int_{\Sigma} \bigg(\frac{\partial u}{ \partial \nu}\bigg )^2   da = \int_{\Omega} 
           \bigg (-\nabla^2 V_{\epsilon}(\nabla u,\nabla u)+V_{\epsilon}|\nabla ^2 u|^2 +  V_{\epsilon}\mbox{Ric}_{\Omega}(\nabla u,\nabla u)\bigg) dA.
\end{aligned}
\label{f:4.4}
\end{equation}
By Proposition \ref{prop:4.1}, the curvature assumptions in Theorem \ref{thm: 1.1} and letting $\epsilon \rightarrow 0$, we know that
\begin{equation*}
     \int_{\Sigma} \bigg(\frac{\partial u}{ \partial \nu}\bigg )^2da  \geq \int_{\Omega} (c+\overline{K}V)|\nabla u|^2 dA.
\end{equation*}
\end{proof}

\begin{proof1}
    Let $u$ be an eigenfunction corresponding to the first nonzero Steklov eigenvalue $\sigma_1$. We then have
    \begin{equation}
        \int_{\Sigma} \bigg(\frac{\partial u}{ \partial \nu}\bigg )^2da = \sigma_1 ^2 \int_{\Sigma} u^2da
        \label{equa:1.1}
    \end{equation}
and
\begin{equation}
    \int_{\Omega} |\nabla u|^2 dA= \sigma_1 \int_{\Sigma} u^2da.
    \label{equa:1.2}
\end{equation}
Note that $-V$ is nonincreasing as a function of $\rho$ and $\rho_{\max}\leq\frac{1}{c}$, we conclude that for any $x \in \Omega$,
$$\overline{K}V(\rho(x))\geq\overline{K}V\bigg(\frac{1}{c}\bigg).$$
Then by (\ref{f:4.1}), (\ref{equa:1.1}) and (\ref{equa:1.2}), we have
$$\sigma_1\geq c+\overline{K}V\bigg(\frac{1}{c}\bigg)=\left\{
    \begin{aligned}
    	&c,~~&K\geq0,\\
    	&c\cosh{\bigg(\frac{\sqrt{|K|}}{c}\bigg)}-\sqrt{|K|}\sinh{\bigg(\frac{\sqrt{|K|}}{c}\bigg)},~~&K \leq 0. 
    \end{aligned}
    \right.$$
Combining Theorem 8 in \cite{escobar1997geometry} and the remark of Theorem \ref{thm: 1.1}, we conclude that
$$\sigma_1\geq\sigma(c,K),$$
where 
$$
    \sigma(c,K)=
    \left\{
    \begin{aligned}
    	&c,~~&K\geq0,\\
    	&c\cosh{\bigg(\frac{\sqrt{|K|}}{c}\bigg)}-\sqrt{|K|}\sinh{\bigg(\frac{\sqrt{|K|}}{c}\bigg)},~~&K_0\leq K \leq 0, \\
    	&\frac{c}{2},~~~&K\leq K_0,
    \end{aligned}
    \right.
    $$
and 
$K_0 \in (-\infty,0)$ is the unique number which satisfies 
$$c\cosh{\bigg(\frac{\sqrt{|K_0|}}{c}\bigg)}-\sqrt{|K_0|}\sinh{\bigg(\frac{\sqrt{|K_0|}}{c}\bigg)}=\frac{c}{2}.$$ 

When $\Omega$ is isometric to a Euclidean ball of radius $\frac{1}{c}$, we know that $K=0$ and $\sigma_1=\sigma(c,K)=c.$

Now we assume that $\sigma_1 = \sigma(c,K)$. Since $\sigma_1 >\frac{c}{2}$, we know that $K \in (K_0,\infty)$. By (\ref{f:4.4}), we have
\begin{equation}
\begin{aligned}
      \sigma(c,K)\int_{\Omega} |\nabla u|^2dA
      &=\int_{\Sigma} \bigg(\frac{\partial u}{ \partial \nu}\bigg )^2 da \\
      &=\int_{\Omega} \bigg (-\nabla^2 V_{\epsilon}(\nabla u,\nabla u)+V_{\epsilon}|\nabla ^2 u|^2 +  V_{\epsilon}\mbox{Ric}_{\Omega}(\nabla u,\nabla u)\bigg) dA\\
      &\geq\int_{\Omega} 
    \bigg ((\sigma(c,K)-\epsilon)|\nabla u|^2+V_{\epsilon}|\nabla ^2 u|^2 +  V_{\epsilon}\mbox{Ric}_{\Omega}(\nabla u,\nabla u)\bigg) dA.
\end{aligned}
\label{f:4.5}
\end{equation}
Then by letting $\epsilon \rightarrow 0$, we have
$$\int_{\Omega}\bigg (V|\nabla ^2 u|^2+V\mbox{Ric}_{\Omega}(\nabla u,\nabla u)\bigg) dA=0.$$
We then have the following Obata equation
\begin{equation}
    \begin{cases}
\begin{aligned}
&\nabla^2 u=0,& {\text{in}} \,\Omega,\\
&\frac{\partial u}{\partial \nu}=\sigma(c,K)u,&{\text{on}} \, \Sigma.
\end{aligned}
\end{cases}
\end{equation}
Then by Proposition 4.3 in \cite{xia2023escobar} and the fact that $c\geq\sigma(c,K)$, we conclude that $\Omega$ is isometric to a Euclidean ball of radius $\frac{1}{\sigma(c,K)}$, which shows $K=0$ and $\sigma_1=\sigma(c,K)=c$.
\end{proof1}

\begin{proof4}
    We still use the weight function $V$ and its Greene-Wu type approximation $V_{\epsilon}$ in the proof of Theorem \ref{thm: 1.1}. Since $K \geq K_0$, we know that $\sigma(c,K)=c+\overline{K}V(\frac{1}{c})$. By (\ref{f:4.4}) and the fact that $f$ is harmonic, we have
\begin{equation*}
\begin{aligned}
           \int_{\Sigma} (kf)^2 da&=
           \int_{\Sigma} \bigg(\frac{\partial f}{ \partial \nu}\bigg )^2   da\\
           &= \int_{\Omega} 
           \bigg (-\nabla^2 V_{\epsilon}(\nabla f,\nabla f)+V_{\epsilon}|\nabla ^2 f|^2 +  V_{\epsilon}\mbox{Ric}_{\Omega}(\nabla f,\nabla f)\bigg) dA\\
           &\geq\bigg(c+\overline{K}V\bigg(\frac{1}{c}\bigg)-\epsilon\bigg)\int_{\Sigma}kf^2da+\int_{\Omega}V_{\epsilon}|\nabla ^2 f|^2 dA.
\end{aligned}
\end{equation*}
Then by letting $\epsilon \rightarrow 0$, we have
$$0=\int_{\Sigma}\bigg(c+\overline{K}V\bigg(\frac{1}{c}\bigg)-k\bigg)kf^2da+\int_{\Omega}V|\nabla ^2 f|^2 dA,$$
which shows $k=\sigma(c,K)$ when $f \not=0$ and $\nabla^2 f =0$. Without loss of generality, we can assume that $|\nabla f|^2=1$ and we then have
$|\nabla_{\Sigma}f|^2+k^2f^2=1$, which shows that $f^{-1}(0)\cap \Sigma$ is an $(n-1)$-dimensional submanifold of $\Sigma$. Thus by the continuity of $k$, we know that 
$$k\equiv\sigma(c,K).$$
Then we conclude that 
$f$ be an eigenfunction corresponding to the first nonzero Steklov eigenvalue $\sigma_1$ and $\sigma_1=\sigma(c,K)$. Then by Theorem \ref{thm: 1.1}, we know that $\Omega$ is isometric to a Euclidean ball of radius $\frac{1}{c}$ and $k \equiv c$.
\end{proof4}

We now present the proof of Theorem \ref{thm: 1.3}. Similar to Proposition \ref{prop:4.2}, we have 
\begin{proposition}
Let $(\Omega^{n+1},g)$ be as in Theorem \ref{thm: 1.3} and $u$ be a harmonic function on $\Omega$. Then we have
\begin{equation}
     \int_{\Sigma} \bigg(\frac{\partial u}{ \partial \nu}\bigg )^2da  \geq c\int_{\Omega} |\nabla u|^2 dA.
\label{f:4.6}
\end{equation}
\label{prop:4.3}
\end{proposition}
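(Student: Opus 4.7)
The plan is to mirror the proof of Proposition \ref{prop:4.2} step by step, since the setup is essentially the same: we have the same weight function $V$ (constructed from the same curvature assumption $K(x)\geq f(\rho(x))$) and its Greene-Wu approximation $V_{\epsilon}$ from Proposition \ref{prop:3.8}. The first thing I would do is rerun the combination of the weighted Reilly-type formula (Proposition \ref{p:2.1}) and the Pohozaev-type identity (Proposition \ref{p:2.2}) with $\phi\equiv 0$, $X=\nabla V_{\epsilon}$, using that $V_{\epsilon}|_{\Sigma}=0$ and $\partial V_{\epsilon}/\partial\nu|_{\Sigma}=-1$. Since $u$ is harmonic, this reproduces exactly the identity
$$\int_{\Sigma}\bigg(\frac{\partial u}{\partial\nu}\bigg)^2 da=\int_{\Omega}\bigl(-\nabla^2 V_{\epsilon}(\nabla u,\nabla u)+V_{\epsilon}|\nabla^2 u|^2+V_{\epsilon}\mbox{Ric}_{\Omega}(\nabla u,\nabla u)\bigr)dA,$$
which is the same as (\ref{f:4.4}); the derivation of this identity did not use any specific curvature hypothesis on $\Omega$.

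The new input is the strengthened Ricci hypothesis $\mbox{Ric}_{\Omega}\geq |\widehat{F}|g$. The plan is to combine this with the Hessian bound $\nabla^2(-V_{\epsilon})\geq (c+\widehat{F}V-\epsilon)g$ from Proposition \ref{prop:3.8}. Recalling that by construction $F\leq 0$, so $\widehat{F}\leq 0$ and $|\widehat{F}|=-\widehat{F}$, the Ricci term gives
$$V_{\epsilon}\mbox{Ric}_{\Omega}(\nabla u,\nabla u)\geq -V_{\epsilon}\widehat{F}|\nabla u|^2,$$
while the Hessian term contributes at least $(c+\widehat{F}V-\epsilon)|\nabla u|^2$. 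Adding them yields the pointwise lower bound
$$-\nabla^2 V_{\epsilon}(\nabla u,\nabla u)+V_{\epsilon}\mbox{Ric}_{\Omega}(\nabla u,\nabla u)\geq \bigl(c+\widehat{F}(V-V_{\epsilon})-\epsilon\bigr)|\nabla u|^2,$$
and the remaining term $V_{\epsilon}|\nabla^2 u|^2$ is nonnegative.

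Finally I would let $\epsilon\to 0$. By Proposition \ref{prop:3.8} we have $V_{\epsilon}\to V$ uniformly on $\Omega$, so the extra correction $\widehat{F}(V-V_{\epsilon})$ vanishes uniformly and we are left with exactly $c|\nabla u|^2$ as the integrand lower bound; integrating gives the claimed inequality (\ref{f:4.6}). The main subtlety worth checking carefully is that the delicate cancellation $\widehat{F}V - V_{\epsilon}\widehat{F}\to 0$ really does clean up the Hessian and Ricci contributions simultaneously, which is precisely why the strengthened assumption $\mbox{Ric}_{\Omega}\geq |\widehat{F}|g$ (rather than $\mbox{Ric}_{\Omega}\geq 0$) is exactly what the argument needs; this is not really an obstacle so much as the one place where the proof differs meaningfully from that of Proposition \ref{prop:4.2}.
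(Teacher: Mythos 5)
Your proposal is correct and takes essentially the same approach as the paper: rerun the Reilly-Pohozaev combination to get the identity \eqref{f:4.9}, bound $-\nabla^2 V_{\epsilon}$ via Proposition \ref{prop:3.8} and $\mbox{Ric}_{\Omega}$ via the strengthened hypothesis $\mbox{Ric}_{\Omega}\geq|\widehat{F}|g$, observe the cancellation $\widehat{F}V+|\widehat{F}|V_{\epsilon}\to(\widehat{F}+|\widehat{F}|)V=0$ as $\epsilon\to 0$, and integrate. The paper presents the same cancellation in the form $\int_{\Omega}(c+(\widehat{F}+|\widehat{F}|)V)|\nabla u|^2\,dA\geq c\int_{\Omega}|\nabla u|^2\,dA$ after the limit, but the mechanism is identical to yours.
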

\begin{proof}
Let $V$ be the weight function discussed in section 3 and $V_{\epsilon}$ be the Greene-Wu type approximation of $V$ in Proposition \ref{prop:3.8}. By a similar proof of Proposition \ref{prop:4.2}, we have
\begin{equation}
\begin{aligned}
           \int_{\Sigma} \bigg(\frac{\partial u}{ \partial \nu}\bigg )^2   da = \int_{\Omega} 
           \bigg (-\nabla^2 V_{\epsilon}(\nabla u,\nabla u)+V_{\epsilon}|\nabla ^2 u|^2 +  V_{\epsilon}\mbox{Ric}_{\Omega}(\nabla u,\nabla u)\bigg) dA.
\end{aligned}
\label{f:4.9}
\end{equation}
By Proposition \ref{prop:3.8}, the curvature assumptions in Theorem \ref{thm: 1.3} and letting $\epsilon \rightarrow 0$, we have
\begin{equation*}
     \int_{\Sigma} \bigg(\frac{\partial u}{ \partial \nu}\bigg )^2da  \geq \int_{\Omega} (c+(\widehat{F}+|\widehat{F}|)V)|\nabla u|^2 dA \geq c \int_{\Omega} |\nabla u|^2 dA.
\end{equation*}
\end{proof}

\begin{proof2}
      Let $u$ be an eigenfunction corresponding to the first nonzero Steklov eigenvalue $\sigma_1$. We then have
$$\int_{\Sigma} \bigg(\frac{\partial u}{ \partial \nu}\bigg )^2da = \sigma_1 ^2 \int_{\Sigma} u^2da$$
and
$$\int_{\Omega} |\nabla u|^2 dA= \sigma_1 \int_{\Sigma} u^2da.$$
Then by (\ref{f:4.6}), we conclude that
$$\sigma_1\geq c.$$

When $\Omega$ is isometric to a Euclidean ball of radius $\frac{1}{c}$, we know that $f\equiv0$ and $\sigma_1=c$.

Now we assume that $\sigma_1 = c$. By (\ref{f:4.9}), we have
\begin{equation*}
\begin{aligned}
      c\int_{\Omega} |\nabla u|^2dA
      &=\int_{\Sigma} \bigg(\frac{\partial u}{ \partial \nu}\bigg )^2 da \\
      &=\int_{\Omega} \bigg (-\nabla^2 V_{\epsilon}(\nabla u,\nabla u)+V_{\epsilon}|\nabla ^2 u|^2 +  V_{\epsilon}\mbox{Ric}_{\Omega}(\nabla u,\nabla u)\bigg) dA\\
      &\geq\int_{\Omega} 
    \bigg ((c+\widehat{F}V+|\widehat{F}|V_{\epsilon}-\epsilon)|\nabla u|^2+V_{\epsilon}|\nabla ^2 u|^2 \bigg) dA.
\end{aligned}
\end{equation*}
Then by letting $\epsilon \rightarrow 0$, we have
$$\int_{\Omega}V|\nabla ^2 u|^2 dA=0.$$
We then have the following Obata equation
\begin{equation}
    \begin{cases}
\begin{aligned}
&\nabla^2 u=0,& {\text{in}} \,\Omega,\\
&\frac{\partial u}{\partial \nu}=cu,&{\text{on}} \, \Sigma.
\end{aligned}
\end{cases}
\end{equation}
Then by Proposition 4.3 in \cite{xia2023escobar}, we conclude that $\Omega$ is isometric to a Euclidean ball of radius $\frac{1}{c}$, which shows $\widehat{F}\equiv0$ and $f\equiv0$.
\end{proof2}

\section{Proof of Theorem \ref{thm: 1.5}}

In this section, we prove Theorem \ref{thm: 1.5}. We first provide the relationship between the Laplacian operator with respect to the conformal metric and the weighted Laplacian operator.

\begin{lemma}
    Let $(M^{n+1},g)$ be a Riemannian manifold and $\widehat{g}=e^{2f}g$, where $f\in C^{\infty}(M)$. Then
    \begin{equation}
    \Delta_{\widehat{g}}=e^{-2f}\mathbb{L}_{-(n-1)f},
    \label{equa:5.1}
    \end{equation}
    where $\Delta_{\widehat{g}}$ is the Laplacian operator with respect to the metric $\widehat{g}$.
\label{l:5.1} 
\end{lemma}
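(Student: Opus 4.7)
The plan is to verify the identity by a direct local-coordinate computation, exploiting the standard formula for the Laplace--Beltrami operator in terms of the metric determinant. Recall that in any local chart $\{x^i\}$ on $M$,
$$
\Delta_{\widehat g} u \;=\; \frac{1}{\sqrt{\det \widehat g}}\,\partial_i\!\left(\sqrt{\det \widehat g}\,\widehat g^{ij}\partial_j u\right),
$$
so the task is to rewrite the right-hand side in terms of objects associated with $g$.

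First, I will record the two elementary consequences of the conformal relation $\widehat g_{ij}=e^{2f}g_{ij}$: since $M$ has dimension $n+1$, we have $\sqrt{\det \widehat g}=e^{(n+1)f}\sqrt{\det g}$, and $\widehat g^{ij}=e^{-2f}g^{ij}$. Substituting these into the coordinate expression above gives
$$
\Delta_{\widehat g}u \;=\; e^{-(n+1)f}\frac{1}{\sqrt{\det g}}\,\partial_i\!\Bigl(e^{(n-1)f}\sqrt{\det g}\,g^{ij}\partial_j u\Bigr).
$$
Next, I will pull the factor $e^{(n-1)f}$ outside the divergence using the Leibniz rule; the derivative of this factor produces a term $(n-1)e^{(n-1)f}\,\partial_i f$, while the remaining term reassembles into $e^{(n-1)f}\sqrt{\det g}\,\Delta u$ via the coordinate formula for the Laplacian of $g$. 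Cancelling the exponential prefactors yields
$$
\Delta_{\widehat g}u \;=\; e^{-2f}\!\left(\Delta u+(n-1)g^{ij}\partial_i f\,\partial_j u\right)\;=\;e^{-2f}\!\left(\Delta u+(n-1)\langle\nabla f,\nabla u\rangle\right),
$$
which is exactly $e^{-2f}\mathbb{L}_{-(n-1)f}u$ by the definition of the weighted Laplacian given in Section~2.

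There is no real obstacle here: the computation is entirely mechanical and the only dimension-sensitive step is keeping track of the exponents $(n+1)f$ and $(n-1)f$ arising from $\sqrt{\det\widehat g}$ and the Leibniz rule respectively. The one subtlety worth flagging is that the inner product on the right-hand side is the one induced by $g$ (not by $\widehat g$), which is consistent with the convention used throughout the preceding sections when defining $\mathbb{L}_\phi$.
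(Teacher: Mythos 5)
Your computation is correct and follows essentially the same route as the paper: both write $\Delta_{\widehat g}$ in local coordinates, substitute $\sqrt{\det\widehat g}=e^{(n+1)f}\sqrt{\det g}$ and $\widehat g^{ij}=e^{-2f}g^{ij}$, apply the Leibniz rule to extract the $(n-1)\langle\nabla f,\nabla u\rangle$ term, and identify the result with $e^{-2f}\mathbb{L}_{-(n-1)f}u$ using the sign convention $\mathbb{L}_\phi=\Delta-\langle\nabla\phi,\nabla\cdot\rangle$ from Section~2. Nothing to add.
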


\begin{proof}
    Let $(x_1,x_2,\cdots,x_{n+1})$ be a coordinate in $M$ and the metric has the form
   $$
    g=g_{ij}dx^i dx^j.
   $$
   Then we have 
   $$\widehat{g}=\widehat{g}_{ij}dx^i dx^j=e^{2f}g_{ij}dx^i dx^j.$$
   For any smooth function $u \in C^{\infty}(M)$, we have
   $$\begin{aligned}
      \Delta_{\widehat{g}}u
      &=\frac{1}{\sqrt{\widehat{g}}}\partial_{x_i}(\sqrt{\widehat{g}}\widehat{g}^{ij}\partial_{x_j}u)\\
      &=\frac{1}{e^{(n+1)f}\sqrt{g}}\partial_{x_i}(e^{(n-1)f}\sqrt{g}g^{ij}\partial_{x_j}u)\\
      &=e^{-2f}(\Delta u+(n-1)\langle\nabla f,\nabla u\rangle)\\
      &=e^{-2f}\mathbb{L}_{-(n-1)f}u.
\end{aligned}
$$
\end{proof}

Combining Lemma \ref{l:5.1} with Propositions \ref{p:2.1} and \ref{p:2.2}, we have the following two integral identities related to the Laplacian operator with respect to the conformal metric.

\begin{proposition}
Let $(\Omega^{n+1}, g)$ be an $(n+1)$-dimensional ($n\geq 2$) smooth compact connected Riemannian manifold with smooth boundary $\Sigma$ and $V$ be a given a.e. twice differentiable function on $\Omega$. Let $\widehat{g}=e^{2f}g$, where $f\in C^{\infty}(\Omega)$ is a smooth function on $\Omega$. Then for any smooth function $u$, we have 
\begin{equation}
\begin{aligned}
     &\int_{\Omega} V\bigg ((e^{2f}\Delta_{\widehat{g}}u)^2-|\nabla ^2 u|^2 \bigg) e^{(n-1)f}dA\\
           =&\int_{\Sigma} V\bigg (2L_{-(n-1)f}^{\Sigma} u \frac{\partial u}{\partial \nu} + H_{-(n-1)f}\bigg(\frac{\partial u}{\partial \nu}\bigg)^2 + h(\nabla_{\Sigma}u, \nabla_{\Sigma}u)\bigg)e^{(n-1)f}da\\
           &+\int_{\Sigma} \frac{\partial V}{\partial \nu}|\nabla_{\Sigma} u|^2 e^{(n-1)f}da  \\
            &+\int_{\Omega}\bigg ( (\nabla^2 V - e^{2f}\Delta_{\widehat{g}} V g + V\mbox{Ric}^{-(n-1)f}_{\Omega})(\nabla u,\nabla u)\bigg)e^{(n-1)f}dA.
\label{f:5.2}
\end{aligned}
\end{equation}
\label{prop:5.2}
\end{proposition}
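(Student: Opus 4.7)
The plan is to derive the identity by specializing the weighted Reilly-type formula of Proposition \ref{p:2.1} to the weight $\phi=-(n-1)f$ and then rewriting the weighted Laplacian on $\Omega$ in terms of $\Delta_{\widehat{g}}$ by means of Lemma \ref{l:5.1}. The proof is thus essentially a substitution exercise; there is no deep step.

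First, I set $\phi=-(n-1)f$ in the identity (\ref{f:2.1}). Under this substitution the weight $e^{-\phi}$ becomes $e^{(n-1)f}$, so every appearance of $e^{-\phi}dA$ and $e^{-\phi}da$ is replaced by $e^{(n-1)f}dA$ and $e^{(n-1)f}da$ respectively, matching the weights on the right-hand side of (\ref{f:5.2}). The boundary operator $\mathbb{L}^{\Sigma}_{\phi}$ becomes $L^{\Sigma}_{-(n-1)f}$, the weighted mean curvature $H_{\phi}$ becomes $H_{-(n-1)f}=H+(n-1)\frac{\partial f}{\partial \nu}$, and the Bakry-\'Emery Ricci tensor $\mathrm{Ric}^{\phi}_{\Omega}$ becomes $\mathrm{Ric}^{-(n-1)f}_{\Omega}=\mathrm{Ric}_{\Omega}-(n-1)\nabla^2 f$; in each case the substitution is immediate from the definitions collected in Section 2.

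Second, I apply Lemma \ref{l:5.1} to rewrite the bulk terms. By (\ref{equa:5.1}), $\mathbb{L}_{-(n-1)f}u=e^{2f}\Delta_{\widehat{g}}u$, so $(\mathbb{L}_{\phi}u)^2=(e^{2f}\Delta_{\widehat{g}}u)^2$, which converts the left-hand side of (\ref{f:2.1}) into that of (\ref{f:5.2}). Similarly $\mathbb{L}_{\phi}V=e^{2f}\Delta_{\widehat{g}}V$, turning the trace term $\mathbb{L}_{\phi}V\cdot g$ in the last bulk integral into $e^{2f}\Delta_{\widehat{g}}V\cdot g$, which is exactly the form appearing in (\ref{f:5.2}). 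Assembling these substitutions, (\ref{f:2.1}) becomes (\ref{f:5.2}) term by term.

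The only point requiring any care is the bookkeeping of signs and of the boundary-versus-interior geometric quantities (the induced metric and $\nu$ are taken with respect to $g$ throughout, and the conformal factor only enters through the bulk operators and volume forms), but these are routine. I do not anticipate a substantive obstacle; the proposition is best viewed as a translation of the already-established weighted Reilly formula (\ref{f:2.1}) into conformal language via Lemma \ref{l:5.1}.
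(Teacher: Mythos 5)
Your proposal is correct and is exactly the route the paper takes: the paper states Proposition \ref{prop:5.2} with the one-line remark that it follows by combining Lemma \ref{l:5.1} with Proposition \ref{p:2.1}, and your substitution $\phi=-(n-1)f$ into (\ref{f:2.1}), together with the identity $\mathbb{L}_{-(n-1)f}=e^{2f}\Delta_{\widehat{g}}$ from (\ref{equa:5.1}), is precisely that computation carried out explicitly.
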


\begin{proposition}
Let $(\Omega^{n+1}, g)$ be an $(n+1)$-dimensional ($n\geq 2$) smooth compact connected Riemannian manifold with smooth boundary $\Sigma$ and $V\in C^{2}(\Omega)$ be a twice continuously differentiable function on $\Omega$. Let $\widehat{g}=e^{2f}g$, where $f\in C^{\infty}(\Omega)$ is a smooth function on $\Omega$. Then for any smooth function $u$ which is harmonic with respect to the metric $\widehat{g}$, we have 
\begin{equation}
\begin{aligned}
           &\int_{\Omega} 
           \bigg (\nabla^2 V(\nabla u,\nabla u) -\frac{1}{2}|\nabla u|^2 e^{2f}\Delta_{\widehat{g}}V \bigg) e^{(n-1)f}dA\\
           =&\int_{\Sigma} \bigg (\frac{\partial u}{\partial \nu}\langle \nabla V,\nabla u \rangle -\frac{1}{2}\frac{\partial V}{\partial \nu}|\nabla u|^2 \bigg)e^{(n-1)f} da.
\end{aligned}
\label{f:5.3}
\end{equation}
\label{prop:5.3}
\end{proposition}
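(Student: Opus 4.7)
The plan is to derive this Pohozaev-type identity as a direct consequence of Proposition \ref{p:2.2} (the Pohozaev identity for the weighted Laplacian) combined with the conformal-to-weighted correspondence established in Lemma \ref{l:5.1}. The key observation is that a $\widehat{g}$-harmonic function is automatically $\mathbb{L}_{-(n-1)f}$-harmonic, so the machinery of Section 2 applies verbatim once the right weight is chosen.

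Concretely, I would apply Proposition \ref{p:2.2} with the specific choices $\phi = -(n-1)f$ and the vector field $X = \nabla V$ (gradient taken with respect to the background metric $g$). The hypothesis $\mathbb{L}_{\phi}u = 0$ is satisfied because $\Delta_{\widehat{g}}u = 0$ by assumption, and Lemma \ref{l:5.1} gives $\mathbb{L}_{-(n-1)f}u = e^{2f}\Delta_{\widehat{g}}u = 0$. The exponential weight in Proposition \ref{p:2.2} becomes $e^{-\phi} = e^{(n-1)f}$, matching both sides of the target identity.

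Next I would simplify the four terms produced. The interior term $\langle \nabla^{\Omega}_{\nabla u} X, \nabla u\rangle$ becomes exactly $\nabla^2 V(\nabla u, \nabla u)$ since $X = \nabla V$. For the weighted divergence, unwinding the definition yields
\begin{equation*}
\mbox{div}_{\phi}(\nabla V) = \Delta V - \langle \nabla\phi, \nabla V\rangle = \Delta V + (n-1)\langle \nabla f, \nabla V\rangle = \mathbb{L}_{-(n-1)f} V,
\end{equation*}
and a second application of Lemma \ref{l:5.1} (to $V$ rather than $u$) rewrites this as $e^{2f}\Delta_{\widehat{g}}V$, producing the precise term appearing on the left-hand side of the statement. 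On the boundary, $\langle X, \nu\rangle = \langle \nabla V, \nu\rangle = \frac{\partial V}{\partial\nu}$, and $\langle X, \nabla u\rangle = \langle \nabla V, \nabla u\rangle$, giving the stated boundary integrand.

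No step presents a genuine obstacle — this is primarily a bookkeeping argument in which Lemma \ref{l:5.1} converts the conformal objects into weighted ones so that the already-established Proposition \ref{p:2.2} may be invoked. The only point requiring a moment of care is verifying that the sign convention $\mathbb{L}_{\phi} = \Delta - \langle\nabla\phi, \nabla\cdot\rangle$ used in Section 2 matches the $+(n-1)\langle\nabla f, \nabla\cdot\rangle$ appearing in Lemma \ref{l:5.1}, which is exactly why the weight is chosen as $\phi = -(n-1)f$ rather than $+(n-1)f$.
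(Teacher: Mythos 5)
Your proof is correct and is precisely the route the paper takes: the text preceding the proposition explicitly states that it follows from combining Lemma \ref{l:5.1} with Proposition \ref{p:2.2}, and your substitution $\phi = -(n-1)f$, $X = \nabla V$ (with the regularity hypothesis $V \in C^2(\Omega)$ guaranteeing $\nabla V$ is Lipschitz, as Proposition \ref{p:2.2} requires) fills in the bookkeeping exactly as intended, including the correct sign reconciliation between $\mathbb{L}_{\phi} = \Delta - \langle\nabla\phi,\nabla\cdot\rangle$ and $\mathbb{L}_{-(n-1)f} = \Delta + (n-1)\langle\nabla f,\nabla\cdot\rangle$.
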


\begin{proof3}
     We still use the weight function $V$ and its Greene-Wu type approximation $V_{\epsilon}$ in Proposition \ref{prop:4.1}.
     Let $u$ be an eigenfunction corresponding to the first nonzero Steklov eigenvalue $\sigma_1(\widehat{g})$. Since $f|_{\Sigma}=0$, we know that
    \begin{equation*}
    \begin{cases}
    \begin{aligned}
    	&\Delta_{\widehat{g}} u=0, & \mbox{in} \,\Omega,\\
    	&\frac{\partial u}{\partial \nu}=\sigma_1(\widehat{g}) u, & \mbox{on} \, \Sigma.
    \end{aligned}
    \end{cases}
    \end{equation*}
Then by Propositions \ref{prop:5.2} and \ref{prop:5.3}, we have 
\begin{equation}
\begin{aligned}
     &-\int_{\Omega} V_{\epsilon}|\nabla ^2 u|^2 e^{(n-1)f}dA\\
     =&-\int_{\Sigma} |\nabla_{\Sigma} u|^2 da  \\
            &+\int_{\Omega}\bigg ( (\nabla^2 V_{\epsilon} - e^{2f}\Delta_{\widehat{g}} V_{\epsilon} g + V_{\epsilon}\mbox{Ric}^{-(n-1)f}_{\Omega})(\nabla u,\nabla u)\bigg)e^{(n-1)f}dA.
\label{f:5.4}
\end{aligned}
\end{equation}
and
\begin{equation}
\begin{aligned}
           &\int_{\Omega} 
           \bigg (\nabla^2 V_{\epsilon}(\nabla u,\nabla u) -\frac{1}{2}|\nabla u|^2 e^{2f}\Delta_{\widehat{g}}V_{\epsilon} \bigg) e^{(n-1)f}dA\\
           =&\frac{1}{2}\int_{\Sigma} \bigg (|\nabla_{\Sigma}u|^2-\bigg(\frac{\partial u}{\partial \nu}\bigg)^2 \bigg) da.
\end{aligned}
\label{f:5.5}
\end{equation}
Combining (\ref{f:5.4}) and (\ref{f:5.5}), we have 
\begin{equation}
\begin{aligned}
           &\int_{\Sigma} \bigg(\frac{\partial u}{\partial \nu}\bigg)^2  da \\
           \geq &\int_{\Omega} \bigg ( V_{\epsilon}|\nabla ^2 u|^2+(-\nabla^2 V_{\epsilon} + V_{\epsilon}\mbox{Ric}^{-(n-1)f}_{\Omega})(\nabla u,\nabla u)\bigg)e^{(n-1)f}dA\\
           \geq& \int_{\Omega} \bigg ( (-\nabla^2 V_{\epsilon} + V_{\epsilon}\mbox{Ric}_{\Omega}-(n-1)V_{\epsilon}\nabla^{2}f)(\nabla u,\nabla u)\bigg)e^{(n-1)f}dA.\\
\end{aligned}
\label{f:5.6}
\end{equation}
By Proposition \ref{prop:4.1}, the assumptions in Theorem \ref{thm: 1.5} and letting $\epsilon \rightarrow 0$, we have
$$\sigma_1(\widehat{g})\int_{\Sigma} u\frac{\partial u}{\partial \nu} da \geq c\int_{\Omega} |\nabla u|^2e^{(n-1)f}dA.$$
By (\ref{equa:5.1}) and the divergence theorem with respect to the weighted Laplacian operator, we know that
$$\int_{\Sigma} u\frac{\partial u}{\partial \nu} da = \int_{\Omega} |\nabla u|^2e^{(n-1)f}dA.$$
Thus we conclude that $\sigma_1(\widehat{g})\geq c$.

When $(\Omega,g)$ is isometric to a Euclidean ball of radius $\frac{1}{c}$ and $f\equiv0$, we know that $\sigma_1(\widehat{g})=c.$

Now we assume that $\sigma_1(\widehat{g})=c$. By a similar calculation in the proof of Theorem \ref{thm: 1.1}, we have
\begin{equation}
    \nabla^2 u=0, ~ \mbox{in} \,\Omega
    \label{f:5.7}
\end{equation}
and
\begin{equation}
    \nabla^2 f(\nabla u, \nabla u)=\frac{1}{n-1}\min\{0,K\}|\nabla u|^2, ~ \mbox{in} \,\Omega.
    \label{f:5.8}
\end{equation}
Since $\frac{\partial u}{\partial \nu}=cu$, by Proposition 4.3 in \cite{xia2023escobar}, we conclude that $(\Omega,g)$ is isometric to a Euclidean ball of radius $\frac{1}{c}$ which shows $K=0$, and $u$ is an eigenfunction corresponding to the first nonzero Steklov eigenvalue $c$ under the standard Euclidean metric $g_{euc}$. For convenience, we assume that 
$$\Omega=\bigg\{(x_1,x_2,\cdots,x_{n+1})\in \mathbb{R}^{n+1}~|~ \sum_{i=1}^{n+1} (x_i)^2\leq\frac{1}{c^2}\bigg\},$$
and 
$$u=x_1.$$
Then by (\ref{f:5.8}) and the assumption that $f|_{\Sigma}=0$, we know that
\begin{equation*}
    \begin{cases}
    \begin{aligned}
    	&\frac{\partial^2f}{\partial x_1^2}=0, & \mbox{in} \,\Omega,\\
    	&f=0, & \mbox{on} \, \Sigma,
    \end{aligned}
    \end{cases}
    \end{equation*}
which shows $f\equiv 0$ on $\Omega$.
\end{proof3}

\bibliographystyle{plain}
\bibliography{ref}

\vskip .5cm
\noindent
Yiwei Liu:\\
School of Mathematical Sciences, Shanghai Jiao Tong University\\
email: lyw201611012@sjtu.edu.cn

\vskip .5cm
\noindent
Yi-Hu Yang:\\
School of Mathematical Sciences, Shanghai Jiao Tong University\\
email: yangyihu@sjtu.edu.cn
\end{document}